\newtheorem{thm}{Theorem}[section]
\newtheorem{lem}{Lemma}[section]
\newtheorem{rem}{Remark}[section]
\newtheorem{defn}{Definition}[section]
\newtheorem{alg}{Algorithm}[section]
\newtheorem{as}{Assumption}[section]
\numberwithin{equation}{section} \allowdisplaybreaks[4]
\begin{document}
\pagestyle{plain}
\title {The minimal hitting probability of continuous-time controlled Markov systems with countable states}
\author{Yanyun~Li \thanks{Yanyun~Li is with School of Mathematics, Sun Yat-Sen University, Guangzhou, 510275, China (email: liyy536@mail2.sysu.edu.cn).} and Junping Li
\thanks{Junping~Li(Corresponding author) is with Guangdong University of Science $\&$ Technology, Dongguan, 523083, China; Central South University, Changsha, 410083, China (email: jpli@mail.csu.edu.cn).}
}

\date{}
\maketitle \underline{}

{\bf Abstract:}
This paper concentrates on the minimal hitting probability of continuous-time controlled Markov systems (CTCMSs) with countable state and finite admissible action spaces. The existence of an optimal policy is first proved. In particular, for a special and important case of controlled branching processes (CBPs), it is proved that the minimal hitting probability is the unique solution to an improved optimal system of equations. Furthermore, a novel and precise improved-policy iteration algorithm of an optimal policy and the minimal hitting probability (minimal extinction probability) is presented for CBPs.
\vskip 0.2 in
\noindent{\bf Key Words.}
Continuous-time controlled Markov systems; Continuous-time controlled branching processes; Optimal policy; Improved-policy iteration algorithm; Minimal extinction probability.

\vskip 0.2 in \noindent {\bf Mathematics Subject Classification.}
91A15, 91A25

\setlength{\baselineskip}{0.25in}
\section{Introduction}

This paper concentrates on the minimal hitting probability of continuous-time controlled Markov systems (CTCMSs) with countable state spaces and finite admissible action spaces. Note that there are some references about the minimal hitting probability for discrete-time controlled Markov systems (DTCMSs), such as \cite{DM22,DEJ11,LGG22}. But it has not been considered in continuous-time with countable states. In this paper, we want  to minimize hitting probability to a prescribed set $B$, i.e., $h_i^{\pi}:=P_{i,0}^{\pi}(\tau_B<\infty)$ in  probability space $(\Omega,\mathcal{F},P^{\pi}_{i,u})$, where $P^{\pi}_{i,u}$ is the probability measure starting from state $i$ at time $u \geq 0$ under policy $\pi$ and $\tau_B$ is the first hitting time to $B$.

Our paper is inspirited by the discrete-time hitting probability problem in \cite{LGG22} as well as the literature on Markov decision processes (MDPs) with risk neutral criteria (\cite{A02,CR12,MLT05,S99}), risk probability criterion (\cite{BM04,BK95,HG-2020}) and risk-sensitive criterion (\cite{BR14,CH11,D07}), respectively. For risk neutral criteria of MDPs, the existence and algorithm of optimal policies have been studied, see \cite{H96,MLT05,S99} for discrete-time case and  \cite{GXP09,GHS12,GHZ15,GZ17,H96} for continuous-time case. In addition, for risk-sensitive criterion of MDPs, the existence and algorithms of optimal policies have been obtained in \cite{BR14,D07} for discrete-time case and in \cite{GL19} for continuous-time case. On the other hand, the risk probability criterion of MDPs has also been deeply discussed in \cite{BM04,BK95,HG-2020}.
In the above literature, the optimization criteria have a common feature that they are  defined by reward/cost functions. However, in reliability engineering, the hitting probability has no connection with reward/cost functions, see \cite{A07,DM22}. To minimize the hitting probability,  \cite{LGG22}  gives the existence and a typical algorithm of optimal policies under discrete-time with finite states. The existence and algorithm of optimal policies with minimal hitting probability in continuous-time case are also worth considering.

The problem of hitting probability originates from \cite{DEJ11}, whose purpose is maximizing the hitting probability to a given set while avoiding another set for DTCMSs with Borel state space and action space. In this paper, we will show the existence of an optimal policy for CTCMSs with countable states and finite actions, and present an algorithm of the optimal policy.
In proving the existence of an optimal policy, we find that the minimal hitting probability can be transferred to its embedded controlled Markov system. Note that the algorithm of the optimal policies in \cite{LGG22} is no longer available because the method of solving the non-uniqueness of solution to optimality system of equations (OE) in \cite{LGG22} is invalid in the case of countable state space.
However, for the important case of continuous-time CBPs, we show that the minimal hitting probability (called minimal extinction probability  in CBPs), is the unique solution of a new optimal system of equations, from which
a novel and improved iteration algorithm is presented.
Comparing with \cite{DM22}, although our algorithm involves solving an equation to find out a better policy set $F_m(a_*)$, no matter in which case, the policy evaluation in our policy iteration requires to solve only one equation (instead of the three equations in \cite{MLT05}), and the policy improvement here consists of only one phase (instead of the two phases in \cite{MLT05}).

The rest of this paper is organized as follows. In Section 2, we describe the optimization problem. The existence of the optimal policy is considered in Section 3. Concerning the algorithms, in Section 4 we present the algorithm of an optimal policy for continuous-time CBP. Section 5 contains the proofs of Theorems \ref{th4.1}-\ref{th4.2}.

\section{The optimization problem and basic conditions}\label{section}

The model considered in this paper is
\begin{eqnarray}\label{2.1}
	\{S, B, (A(i)\subset A: i \in S), q(j| i, a)\},
\end{eqnarray}
where the components are explained as below:
\begin{itemize}
	\item  ~$S$, the state space, is assumed to be countable;
	\item  $B$, a given finite target set, is a subset of $S$. In reliable engineering situations, $B$ represents the set of all failed states in a system;
	\item ~$A(i)$, the set of admissible actions at state $i \in S$ and assumed to be finite. $A:=\cup_{i\in S}A(i)$ is the action space. Then the set of all feasible state-actions pairs is
	\begin{eqnarray*}
		\mathbf{K}:=\{(i,a):\ i\in S,\ a\in A(i)\};
	\end{eqnarray*}
	
	\item $q(j|i, a)$ is the transition rate, which satisfies $q(j|i,a) \geq 0$ for all $(i,a) \in \mathbf{K}$ and $j \neq i$. Moreover, we assume that $q(j|i,a)$ are conservative, i.e.,
	\begin{eqnarray}\label{2.2}
		\sum_{j \in S}q(j|i,a)=0\quad \forall (i,a) \in \mathbf{K},
	\end{eqnarray}
	and stable, i.e.,
	\begin{eqnarray}\label{2.3}
		q^*(i):=\max_{a \in A(i)}q_i(a)< \infty\quad \forall i \in S,
	\end{eqnarray}
	where $q_i(a):=-q(i|i,a)< \infty$ for all $i \in \mathbf{K}$.
	
Note that if $q_i(a)=0$ for some $a \in A$, then $\sum\limits_{j \neq i}q(j|i,a)=0$, i.e., there is no transition at state $i$ under $a$. Therefore, we assume that $q_i(a)>0$ for all $i\in B^c$ and $a \in A(i)$.
\end{itemize}

Now we give the definition of a randomized Markov policy.
\begin{defn}\label{def2.1}
	{\rm A randomized Markov policy is a
		real-valued function $\pi_t(C|i)$ that satisfies the following conditions:
		\begin{description}
			\item[(i)]For all $i\in S$ and $C\in \mathscr{B}(A(i))$, the mapping $t\rightarrow \pi_t(C|i)$ is measurable on $[0,\infty)$.
			
			\item[(ii)] For all $i\in S$ and $t\geq 0$, $\pi_t(\cdot |i)$ is a probability measure on $\mathscr{B}(A)$ with $\pi_t(A(i)|i)=1$, where $\pi_t(C|i)$ is the probability that an action
			in $C$ is taken when the system's state is $i$ at time $t$.
		\end{description}
	}
\end{defn}
A randomized Markov policy $\pi_t(C|i)$ is said to be randomized stationary if $\pi_t(C|i)\equiv \pi(C|i)$. A randomized stationary policy $\pi$ is
said to be deterministic stationary if there exists a mapping $f$ from $S$ to $A$ such that $\pi(f(i)|i)\equiv1$ for all $i \in S$. Such deterministic stationary policy will be written as $f$ for brevity.

We denote $\Pi_m$ as the family of the randomized Markov policies and $F$ as the family of the deterministic stationary policies. Clearly, $\emptyset \neq F \subset \Pi_m$.

For each $\pi=\{\pi_t:\ t \geq 0\} \in \Pi_m$, the associated transition rates are defined as
\begin{eqnarray}\label{2.6}
	q(j|i, \pi_t):=\sum_{a \in A(i)}q(j|i,a)\pi_t(a|i)\ \ \text{for}\ i,j \in S, t \geq 0
\end{eqnarray}
and denote $q_i(\pi_t):=\sum_{a\in A(i)}q_i(a)\pi_t(a|i)$.
It follows from (\ref{2.2})-(\ref{2.3}) that (\ref{2.6}) is well-defined and bounded by $q^*(i)$. Moreover, $q(j|i, \pi_t)$ is measurable in $t \geq 0$ for any fixed $i,j \in S$, and $q(j|i, \pi_t)$ is also stable and conservative by (\ref{2.3}) and (\ref{2.6}).
In the deterministic stationary case, that is, $\pi_t=f \in F$, we write $q(j|i, \pi_t)$ as $q(j|i, f(i))$.

We now describe the evolution of the CTCMS given by (\ref{2.1}). Suppose that the system is at state $i \in S$ at time $t \geq 0$. The controller or decision-maker takes an action $a \in A(i)$. Then, a transition from state $i$ to state $j$ ($j \neq i$) occurs with probability $q(j|i,a)dt+o(dt)$; or the  system remains at state $i$ with probability $1+q_i(a)dt+o(dt)$. The  sojourn time at state $i$ is exponentially distributed with parameter $q_{i}(a)$. After transforming to state $j$, the similar evolution goes on like above.
The controller aims at finding the policy minimizing the probability of hitting the target set $B$.

In particular, if $S=\mathbf{Z}_+$ (the nonnegative integer set) and $q(j|i,a)=ib_{j-i+1}(a)\ (j\geq i-1), =0\ (j<i-1)$ for any $i\in \mathbf{Z}_+$ and $a\in A(i)$, where $b_k(a)\geq 0\ (k\neq 1), b_1(a)=\sum_{k\neq 1}b_k(a)<\infty$, then the CTCMS given by (\ref{2.1}) becomes a continuous-time controlled branching system.

By \cite{GXP09}, for each policy $\pi \in \Pi_m$, there exists a transition probability on $S$, denoted by $p_{\pi}(s,i,t,j)$ for $i,j \in S$ and $t \geq s \geq 0$, satisfying
\begin{eqnarray}\label{2.7}
	\frac{\partial p_{\pi}(s,i,t,j)}
	{\partial s}=
	-\sum_{k=0}^{\infty}q(k|i,\pi_s) p_{\pi}(s,k,t,j)
\end{eqnarray}
and
\begin{eqnarray}\label{2.8}
	\frac{\partial p_{\pi}(s,i,t,j)}{\partial t}=\sum_{k=1}^{\infty}p_{\pi}(s,i,t,k) q(j|k,\pi_t)
\end{eqnarray}
for all $i,j\in S$ and almost every $t\geq s\geq 0$, which are called Kolmogorov backward and forward equations, respectively.

To guarantee that the transition function $p_{\pi}(s,i,t,j)$ is regular (e.g., unique and honest),
we make the following simple and mild assumption which is due to \cite[P. 13]{GXP09}.
\begin{as}\label{as2.1}
	{\rm	There exists a function $R \geq 1$ on $S$ and constants $c_0 \neq 0$, $b_0 \geq 0$, and $L_0>0$ such that
		\begin{description}
			\item [(1)]\ $\sum_{j \in S}q(j|i,a)R(j) \leq c_0R(i)+b_0$ for all $(i,a) \in \mathbf{K}$;
			\item [(2)]\ $q^*(i) \leq L_0R(i)$ for all  $i \in S$.
	\end{description}}
\end{as}
Under Assumption \ref{as2.1}, the transition probability $p_{\pi}(s,i,t,j)$ is regular and thus unique.
Let $\{X(t):t\geq 0\}$ be the CTCMS described as above and $T_k$ be the $k$'th jumping time of it. Then, $\{X(T_k):k\geq 0\}$ is the corresponding embedded CMS. Also, $\{(X(t),a(t)):t\geq 0\}$ is the state-action process.

For any $i \in S$ and policy $\pi=\{\pi_t:\ t \geq 0\} \in \Pi_m$, Theorem 2.3 in \cite{GXP09} ensures the existence of a unique probability measure $P_{i,u}^{\pi}\ (u \geq 0)$ on $(\Omega, \mathscr{F})$ of the state-action process $\{(X(t),a(t)):t \geq 0\}$. Moreover, under a fixed policy $\pi=\{\pi_t:\ t \geq 0\} \in \Pi_m$,
\begin{eqnarray*}
	&&P_{i,0}^{\pi}(X(0)=i)=1,\ \quad
	P_{i,0}^{\pi}(X(t)=k|X(s)=j)=p_{\pi}(s,j,t,k),\\
	&&P_{i,0}^{\pi}(a(t)\in C|X(t)=j)=\pi_t(C|j)
\end{eqnarray*}
for all $t \geq s \geq 0$, $j \in S$ and $C \in \mathscr{B}(A)$.

Now, let
\begin{eqnarray}\label{tau}
	\tau_B=\inf \{t \geq 0: \ X(t) \in B\}
\end{eqnarray}
be the first hitting time on $B$.  In reliable engineering situations, $\tau_B$ represents the life-time of the system.
For a given policy $\pi \in \Pi_m$, the hitting probability from $i\in S$ at time $u \geq 0$, denoted by $h_{(u,i)}^{\pi},$ is given by
\begin{eqnarray}\label{hitting}
	h_{(u,i)}^{\pi}:=P_{i,u}^{\pi}(\tau_B< \infty)
\end{eqnarray}
and $h_{(0,i)}^{\pi}$ is simply rewritten as $h_i^{\pi}$.

The purpose of this paper is to consider the optimization problem:
\begin{eqnarray}\label{2.9}
	\mbox{Minimize} \ h_i^{\pi} \ \mbox{ over} \ \pi \in \Pi_m ,\quad {\rm for \ all}  \ i\in S.
\end{eqnarray}
From (\ref{tau})-(\ref{hitting}), $h^{\pi}_i=1\ (i\in B)$ for all $\pi \in \Pi_m$. Therefore, the above optimization problem (\ref{2.9}) is transformed to:
\begin{eqnarray}\label{2.10}
	\mbox{Minimize} \ h^{\pi}_i \ \mbox{ over} \ \pi\in \Pi_m ,\quad {\rm for \ all}  \ i\in B^c.
\end{eqnarray}

Since the main purpose of this paper is to consider the minimal hitting probability of target set $B$, we assume that $B$ is absorbing under all $\pi\in \Pi_m$.
Let
\begin{eqnarray}\label{2.11}
	h_i^*:=\inf_{\pi \in \Pi_m}h_i^{\pi},\ \  i\in B^c,
\end{eqnarray}
which refers as the minimal hitting probability starting from state $i \in B^c$.

\begin{defn}\label{def2.2}
	{\rm	A policy $\pi^*\in \Pi_m$ satisfying $
		h^{\pi^*}_i=h_i^*$ for all $i\in B^c$,
		is said to be optimal.}
\end{defn}

\section{On the existence of an optimal stationary policy}

In \cite{DEJ11,LGG22}, the existence of an optimal policy has been given for DTCMSs. However, in continous-time case, it still has not been shown, and
we will present the existence in this section. Before doing so, some preparation (Lemmas \ref{le3.1}-\ref{le3.2}) are presented, where Lemma \ref{le3.1}  follows directly from the Markov property and  Theorem 2.6 in \cite{Chen1992}.

\begin{lem}\label{le3.1}	
	{\rm For all $f \in F$,		
\begin{description}
	\item[(i)] $(h^f_i:i \in B^c)$ is a nonnegative solution of the following probabilistic equation system:
\begin{eqnarray}\label{3.9}
\begin{cases}
x_i=\frac{1}{q_i(f(i))}[\sum\limits_{k \in B}{q(k|i,f(i))}+\sum\limits_{k \in B^c \setminus \{i\}}q(k|i, f(i))x_k],\ \ & i\in B^c,\\
					0\leq x_i \leq 1,\ & i\in B^c;
\end{cases}
\end{eqnarray}	
			
\item[(ii)] if a vector $(y_i:\ i\in B^c)$ satisfies
\begin{eqnarray*}
\begin{cases}
y_i\geq \frac{1}{q_i(f(i))}[\sum\limits_{k \in B}{q(k|i,f(i))}+\sum\limits_{k \in B^c \setminus \{i\}}q(k|i, f(i))y_k],\ &  i\in B^c,\\
					y_i\geq 0,\ & i\in B^c,
\end{cases}
\end{eqnarray*}
then $y_i \geq h^f_i$ for all $i \in B^c$.
	
\end{description} }
\end{lem}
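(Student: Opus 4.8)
The plan is to reduce the continuous-time hitting problem under a fixed $f\in F$ to the corresponding discrete-time problem on the embedded jump chain, and then to invoke the classical minimal-nonnegative-solution characterization of hitting probabilities. First I fix $f\in F$ and note that under $f$ the process $\{X(t):t\ge 0\}$ is a time-homogeneous continuous-time Markov chain with conservative and stable $q$-matrix $(q(j|i,f(i)))$; by Assumption \ref{as2.1} its transition function is regular, hence the chain is non-explosive and $T_k\uparrow\infty$ $P^f_{i,0}$-a.s. Since $q_i(f(i))>0$ for every $i\in B^c$, the embedded chain $\widehat X_k:=X(T_k)$ is well defined, with one-step transition probabilities $\widehat p(j|i)=q(j|i,f(i))/q_i(f(i))$ for $i\in B^c$, $j\neq i$ (and $\widehat p(i|i)=0$), while $B$ remains absorbing because it is absorbing for $\{X(t)\}$.

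Next I would establish $h^f_i=\widehat P_i(\widehat\tau_B<\infty)$ for $i\in B^c$, where $\widehat\tau_B:=\inf\{k\ge 0:\widehat X_k\in B\}$ and $\widehat P_i$ denotes the law of the embedded chain started from $i$. This follows from the event identity $\{\tau_B<\infty\}=\{\widehat\tau_B<\infty\}$: if $\widehat X_n\in B$ for some $n$, then $\tau_B\le T_n<\infty$ a.s., since $T_n$ is a finite sum of a.s.\ finite exponential sojourn times; conversely, as the state changes only at the jump times and $B$ is absorbing, $\tau_B<\infty$ forces $\widehat X_m\in B$ for some $m$.

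Finally I would apply the strong Markov property at $T_1$ together with Theorem 2.6 of \cite{Chen1992} to the embedded chain: the map $i\mapsto\widehat P_i(\widehat\tau_B<\infty)$ on $B^c$ is the \emph{minimal} nonnegative solution of $x_i=\sum_{k\in B}\widehat p(k|i)+\sum_{k\in B^c\setminus\{i\}}\widehat p(k|i)\,x_k$ with $0\le x_i\le 1$; substituting $\widehat p(k|i)=q(k|i,f(i))/q_i(f(i))$ rewrites this system exactly as \eqref{3.9}, which gives (i), while the minimality clause is precisely (ii). As a self-contained alternative for (ii), one may run the iteration $x^{(0)}\equiv 0$, $x^{(n+1)}_i=\sum_{k\in B}\widehat p(k|i)+\sum_{k\in B^c\setminus\{i\}}\widehat p(k|i)\,x^{(n)}_k$, checking that $x^{(n)}_i\uparrow h^f_i$ and, by induction on $n$, $x^{(n)}_i\le y_i$ whenever $y$ is a nonnegative supersolution. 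The routine parts are this substitution and the monotone iteration; the one point that deserves care — and the reason Assumption \ref{as2.1} and the standing hypothesis $q_i(f(i))>0$ on $B^c$ enter — is the event identity above: non-explosion prevents the chain from escaping to infinity without the embedded chain ever reaching $B$, and positivity of $q_i(f(i))$ on $B^c$ ensures the embedded chain genuinely jumps at every step, so that it is a bona fide discrete-time Markov chain. Once this identification is in place, (i) and (ii) are the classical hitting-probability facts for discrete-time Markov chains.
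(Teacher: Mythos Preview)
Your proposal is correct and follows essentially the same route the paper indicates: the paper merely states that Lemma~\ref{le3.1} ``follows directly from the Markov property and Theorem~2.6 in \cite{Chen1992}'', and your argument is exactly a fleshed-out version of this---pass to the embedded jump chain, identify $\{\tau_B<\infty\}$ with the discrete-time hitting event (using non-explosion from Assumption~\ref{as2.1} and $q_i(f(i))>0$ on $B^c$), and then read off (i) and (ii) from the minimal-nonnegative-solution characterization of hitting probabilities. The only addition over the paper is that you make explicit the reduction to the embedded chain and the event identity, which the paper leaves implicit.
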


Next we derive some properties of $h^{\pi}_{(u,i)}$. Denote
$
p_{\pi}(s,i,t,B):=\sum\limits_{j \in B}p_{\pi}(s,i,t,j).
$
\begin{lem}\label{le3.2}
	{\rm Suppose that $\pi=\{\pi_t:t\geq 0\}\in \Pi_{m}$. For every $i \in B^c$,
\begin{description}
\item[(i)] $h^{\pi}_{(u,i)}$ is differentiable respect to $u\in [0,\infty)$;
			
\item[(ii)] $
			\inf\limits_{\pi\in \Pi_m}[q_i(\pi_0) (h^{\pi}_{(0,i)}-h^*_i)-(h^{\pi}_{(0,i)})']\leq 0$.
		\end{description}
	}
\end{lem}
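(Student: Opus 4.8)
The plan is to analyze $h^{\pi}_{(u,i)}$ via a first-step (conditioning on the first jump) analysis of the continuous-time chain under the time-inhomogeneous rates $\pi_t$. For part (i), I would start from the standard renewal-type identity: for $i \in B^c$ and $u \geq 0$,
\begin{eqnarray*}
h^{\pi}_{(u,i)} = \int_u^{\infty} q_i(\pi_t)\, e^{-\int_u^t q_i(\pi_r)\,dr}\,\Bigl[\sum_{k \in B}\tfrac{q(k|i,\pi_t)}{q_i(\pi_t)} + \sum_{k \in B^c\setminus\{i\}}\tfrac{q(k|i,\pi_t)}{q_i(\pi_t)} h^{\pi}_{(t,k)}\Bigr]dt,
\end{eqnarray*}
which says that starting at $i$ at time $u$, the first jump occurs at some time $t$ (with the displayed exponential density), and then one either lands in $B$ directly or in some $k \in B^c$ and continues. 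Assumption \ref{as2.1} guarantees regularity of $p_\pi$ so this identity is valid and the integrand is dominated (each bracketed term is bounded by $1$, and $q_i(\pi_t)$ is bounded by $q^*(i)$). Substituting $v = t-u$ and factoring, one rewrites the right-hand side so that the only $u$-dependence sits inside shifted, absolutely continuous integrals; since $t \mapsto q_i(\pi_t)$ and $t \mapsto q(k|i,\pi_t)$ are measurable and bounded, the resulting expression is absolutely continuous, hence differentiable a.e., and in fact one checks directly that $u \mapsto h^{\pi}_{(u,i)}$ is differentiable on $[0,\infty)$ with derivative
\begin{eqnarray*}
(h^{\pi}_{(u,i)})' = q_i(\pi_u)\Bigl[h^{\pi}_{(u,i)} - \sum_{k\in B}\tfrac{q(k|i,\pi_u)}{q_i(\pi_u)} - \sum_{k\in B^c\setminus\{i\}}\tfrac{q(k|i,\pi_u)}{q_i(\pi_u)}h^{\pi}_{(u,k)}\Bigr].
\end{eqnarray*}
(Here I use $q_i(\pi_u) > 0$ on $B^c$, as assumed in Section 2.) This is really just the backward Kolmogorov equation restricted to the event $\{\tau_B < \infty\}$, so an alternative derivation is to differentiate $h^{\pi}_{(u,i)} = \sum_{k\in B^c} p_\pi(u,i,\cdot,\cdot)\cdots$ using (\ref{2.7}); either route gives (i).

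For part (ii), the idea is to use the dynamic-programming/semigroup structure at the initial time. Fix $i \in B^c$ and let $\varepsilon > 0$. Choose, for each $k \in B^c$, a policy $\pi^{(k)} \in \Pi_m$ with $h^{\pi^{(k)}}_k \leq h^*_k + \varepsilon$. Now build a policy $\pi$ that behaves arbitrarily (say according to some fixed $f$) on $[0,\delta]$ and then, conditioned on being at state $k$ at time $\delta$, switches to (a time-shifted copy of) $\pi^{(k)}$; by the Markov property $h^\pi_{(\delta,k)} \leq h^*_k + \varepsilon$ for all $k$. Applying the first-step identity on the small interval $[0,\delta]$ and letting $\delta \downarrow 0$, one obtains, using (i) to expand $h^\pi_{(\delta,i)} = h^\pi_{(0,i)} + \delta (h^\pi_{(0,i)})' + o(\delta)$,
\begin{eqnarray*}
h^{\pi}_{(0,i)} - (h^{\pi}_{(0,i)})'\cdot 0 \ \leq\ \cdots,
\end{eqnarray*}
more precisely $q_i(\pi_0)\bigl(h^\pi_{(0,i)} - h^*_i\bigr) - (h^\pi_{(0,i)})' \leq q_i(\pi_0)\cdot(\text{error of order }\varepsilon)$, after rearranging the first-step relation and using $h^*_i \leq h^\pi_{(0,i)}$ together with $h^\pi_{(\delta,k)} \leq h^*_k + \varepsilon$. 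Taking the infimum over such $\pi$ and then $\varepsilon \downarrow 0$ yields $\inf_{\pi \in \Pi_m}\bigl[q_i(\pi_0)(h^\pi_{(0,i)} - h^*_i) - (h^\pi_{(0,i)})'\bigr] \leq 0$.

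The step I expect to be the main obstacle is making the passage $\delta \downarrow 0$ in part (ii) fully rigorous: one must control the error terms uniformly in the (infinitely many) states $k \in B^c$ reachable in a short interval and verify that the concatenated policy $\pi$ is genuinely in $\Pi_m$ (measurability of $t \mapsto \pi_t$) and that its hitting probability and derivative at $0$ behave as claimed. Assumption \ref{as2.1} and the finiteness of $B$ and of each $A(i)$, together with the boundedness $q_i(\pi_t) \leq q^*(i)$, should provide the domination needed, but the bookkeeping of the $o(\delta)$ terms across states is the delicate point; a cleaner alternative is to bypass the explicit $\delta$-limit and instead argue directly from the definition of the infimum $h^*_i$ and the first-step identity at time $0$, which is the route I would ultimately write up.
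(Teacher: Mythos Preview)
Your approach to (i) is essentially correct and yields the same derivative formula as the paper, though by a slightly different route: you condition on the first jump, whereas the paper integrates the Kolmogorov backward equation (\ref{2.7}) and passes to the limit $t\to\infty$ via dominated convergence to obtain $h^\pi_{(u,i)} - h^\pi_i = -\int_0^u \sum_k q(k|i,\pi_v) h^\pi_{(v,k)}\,dv$. Either route is fine.

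For (ii), however, there is a genuine gap. Your concatenated policy --- using some $f$ on $[0,\delta]$ and then, ``conditioned on being at state $k$ at time $\delta$'', switching to $\pi^{(k)}$ --- is \emph{history-dependent}, not Markov: at times $t>\delta$ the action would have to depend on the state occupied at the earlier time $\delta$, not just on the current state, so this policy is not in $\Pi_m$. (The obstacle you flag is measurability, but the real one is the Markov property itself.) Your ``cleaner alternative'' runs into the same wall in disguise: plugging the derivative formula from (i) into the expression in (ii) gives
\[
\sum_{k\in B}q(k|i,\pi_0)+\sum_{k\in B^c\setminus\{i\}}q(k|i,\pi_0)\,h^\pi_k - q_i(\pi_0)\,h^*_i,
\]
and to drive this below $0$ you need a \emph{single} $\pi\in\Pi_m$ with $h^\pi_k$ close to $h^*_k$ simultaneously for all $k$ reachable from $i$. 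That is precisely the existence of an (approximately) optimal Markov policy --- exactly what Theorem~\ref{th3.1} proves \emph{using} this lemma --- so the argument is circular.

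The paper avoids this by a contradiction-and-time-shift argument that never requires a simultaneous near-optimizer. Assuming the infimum at some $i^*$ exceeds $\delta>0$, it picks any $\tilde\pi$ with $h^{\tilde\pi}_{i^*}-h^*_{i^*}<\varepsilon/\tilde q(i^*)$; the contradiction hypothesis then forces $(h^{\tilde\pi}_{(0,i^*)})'<-\delta+\varepsilon<0$. Let $m_{\tilde\pi}$ be the first time this derivative bound fails (necessarily finite, since $h^{\tilde\pi}_{(\cdot,i^*)}\ge 0$ cannot decrease at a fixed rate forever) and consider the \emph{time-shifted} policy $\pi^{(v)}_t:=\tilde\pi_{m_{\tilde\pi}+v+t}$. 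This is genuinely in $\Pi_m$, and by homogeneity of the rates $h^{\pi^{(v)}}_{(0,i^*)}=h^{\tilde\pi}_{(m_{\tilde\pi}+v,i^*)}$. For small $v$ this shifted policy is still $\varepsilon$-near-optimal at $i^*$, so its derivative at $0$ is again $<-\delta+\varepsilon$, contradicting the definition of $m_{\tilde\pi}$. The key structural point is that time-shifting a Markov policy stays inside $\Pi_m$, whereas your state-dependent concatenation does not.
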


\begin{proof}
 Since $B$ is absorbing, we have $P^{\pi}_{i,u}(\tau_B\leq t)=p_{\pi}(u,i,t,B)$ for all $i\in S$ and $t\geq u \geq 0$. By Proposition B.2 (c) in \cite{GXP09}, $\lim\limits_{t \rightarrow \infty}p_{\pi}(u,i,t,B)$ exists for all $u \geq 0$ and $i \in S$. Thus,
\begin{eqnarray}\label{lim}
h^{\pi}_{(u,i)}\!\!=\! \! P_{i,u}^{\pi}(\tau_B < \infty)\!\!=\lim_{t \rightarrow \infty}\! p_{\pi}(u,i,t,B)\qquad \forall u \geq 0.
\end{eqnarray}

Now prove (i). We first prove that $h^{\pi}_{(u,i)}$ is continuous in $u\in [0,\infty)$. Indeed, it follows from the absorption of $B$ and Proposition B.2 (c) in \cite{GXP09} that for any $\varepsilon\in (0,t-u)$,
 \begin{eqnarray*}
|p_{\pi}(u,i,t,B)-p_{\pi}(u+\varepsilon,i,t,B)|\leq|p_{\pi}
(u,i,u+\varepsilon,i)-1|+p_{\pi}(u,i,u+\varepsilon,\{i\}^c).
 \end{eqnarray*}
 Letting $t\rightarrow \infty$ and then letting $\varepsilon\downarrow 0$ yield the right-continuity of $h^{\pi}_{(u,i)}$. Similar argument implies the left-continuity of $h^{\pi}_{(u,i)}$.

 By (\ref{2.7}), for all $i\in B^c$ and almost every $t\geq u \geq 0$,
\begin{eqnarray}\label{ppi} p_{\pi}(u,\!i,\!t,\!B)\!-\!p_{\pi}(0,\!i,\!t,\!B)\!=\!-\sum_{k \in S}\!\!\int_0^u\!\!
q(k|i,\pi_v)
p_{\pi}(v,\!k,\!t,\!B)dv.
\end{eqnarray}	
Letting $t \rightarrow \infty$ on (\ref{ppi}) and using dominated convergence theorem implies that
\begin{eqnarray*}
h^{\pi}_{(u,i)}-h^{\pi}_i=-\sum_{k \in S}\int_0^uq(k|i,\pi_v)
h^{\pi}_{(v,k)}dv,\ \ i\in B^c,\ a.e.\ u\geq 0.
\end{eqnarray*}
By the continuity of both sides in the above equality, we know that it actually holds for all $u\in [0,\infty)$. Hence,
\begin{eqnarray}\label{3.2}
		(h^{\pi}_{(u,i)})'=-\sum_{k \in S}q(k|i,\pi_u)
		h^{\pi}_{(u,k)},\ \ i\in B^c,\ a.e.\ u\geq 0,
\end{eqnarray}
which implies (i).
	
Next prove (ii). Suppose that there exists some $i^* \in B^c$ such that
\begin{eqnarray*}
		\inf_{\pi\in \Pi_{m}}[q_{i^*}(\pi_0)(h^{\pi}_{(0,i^*)}-h^*_{i^*})-(h^{\pi}_{(0,i^*)})']>0.
\end{eqnarray*}
Denote $\tilde{q}(i^*):=\max\limits_{a\in A(i^*)}q_{i^*}(a)$. By (\ref{2.3}) and the assumption $q_i(a)>0\ (i\in B^c, a\in A(i))$, we know that $0<\tilde{q}(i^*)<\infty$. Moreover, there exists $\delta>0$ such that
	\begin{eqnarray}\label{as}
		\inf_{\pi\in \Pi_{m}}[q_{i^*}(\pi_0)(h^{\pi}_{(0,i^*)}-h^*_{i^*})-(h^{\pi}_{(0,i^*)})']>\delta>0,
	\end{eqnarray}
and thus for any $\pi \in \Pi_m$, $\tilde{q}(i^*)(h^{\pi}_{(0,i^*)}-h^*_{i^*})-(h^{\pi}_{(0,i^*)})'>\delta>0$. It follows from (\ref{2.11}) that for any $\varepsilon \in (0,\delta)$, there exists $\tilde{\pi} \in \Pi_m$ such that
\begin{eqnarray}\label{hh}
h^{\tilde{\pi}}_{(0,i^*)}-h^*_{i^*}<\tilde{q}(i^*)^{-1}\varepsilon.
\end{eqnarray}
Denote
$\Pi_{\varepsilon}:=\{\pi\in \Pi_m\!: h^{\pi}_{(0,i^*)}-h^*_{i^*}<\tilde{q}(i^*)^{-1}\varepsilon\}$.
Obviously, by (\ref{hh}), $\Pi_{\varepsilon} \neq \emptyset$. It follows from (\ref{as}) that
\begin{eqnarray}\label{3.3}
(h^{\pi}_{(0,i^*)})'<-\delta+\varepsilon<0,\quad \forall \pi\in \Pi_{\varepsilon}.
\end{eqnarray}

Take a policy $\tilde{\pi}=\{\tilde{\pi}_t:t\geq 0\}\in \Pi_{\varepsilon}$ and let
\begin{equation}\label{3.4}
		m_{\tilde{\pi}}:=\inf\{u\geq 0:\ (h^{\tilde{\pi}}_{(u,i^*)})'\geq -\delta+\varepsilon\}.
\end{equation}
Obviously, $m_{\tilde{\pi}}<\infty$. Therefore, $(h^{\tilde{\pi}}_{(u,i^*)})'<-\delta+\varepsilon<0$ for all $u \in [0,	m_{\tilde{\pi}})$.
	
Now, take $\pi^{(v)}=\{\pi^{(v)}_{t}: t \geq 0\}=\{\tilde{\pi}_{m_{\tilde{\pi}}+v+t}:t\geq 0\}\in \Pi_m$ for $v\geq 0$.
By the homogeneous property, we have
	\begin{eqnarray*}	p_{\pi^{(v)}}(u,i,t,j)=p_{\tilde{\pi}}(m_{\tilde{\pi}}+v+u,i,
m_{\tilde{\pi}}+v+t,j),\ \ i,j\in S,\ 0\leq u\leq t
	\end{eqnarray*}
	and
\begin{equation}\label{h}	h^{\pi^{(v)}}_{(u,i^*)}=h^{\tilde{\pi}}_{(m_{\tilde{\pi}}+v+u,i^*)},\quad u \geq 0.
\end{equation}
Thus, $\pi^{(0)}\in \Pi_{\varepsilon}$. It follows from (\ref{h}) that
\begin{eqnarray*}
h^{\pi^{(v)}}_{(0,i^*)}=h^{\tilde{\pi}}_{(m_{\tilde{\pi}}+v,i^*)},\ v\geq 0.
\end{eqnarray*}
Note that $h^{\tilde{\pi}}_{(m_{\tilde{\pi}},i^*)}-h^*_{i^*}\leq h^{\tilde{\pi}}_{(0,i^*)}-h^*_{i^*}<\tilde{q}(i^*)^{-1}\varepsilon$. By the continuity of $h^{\tilde{\pi}}_{(u,i^*)}$, there exists $\tilde{v}>0$ such that $h^{\pi^{(v)}}_{(0,i^*)}-h^*_{i^*}=h^{\tilde{\pi}}_{(m_{\tilde{\pi}}+v,i^*)}-h^*_{i^*}
<\tilde{q}(i^*)^{-1}\varepsilon$ for all $v\in[0,\tilde{v})$. Therefore, $\pi^{(v)}\in \Pi_{\varepsilon}$ for all $v\in[0,\tilde{v})$ and hence by (\ref{3.3}),
\begin{eqnarray*}
(h^{\tilde{\pi}}_{(m_{\tilde{\pi}}+v,i^*)})'=(h^{\pi^{(v)}}_{(0,i^*)})'
<-\delta+\varepsilon,\quad \forall v\in [0,\tilde{v})
\end{eqnarray*}
which contradicts with the definition of $m_{\tilde{\pi}}$. (ii) is proved.
\end{proof}
We now show the existence of an optimal policy for CTCMSs.

\begin{thm}\label{th3.1}
	{\rm \begin{description} \item[(i)] The minimal hitting probability $(h^*_i:\ i \in B^c)$ satisfies the following optimality equation (OE):
\begin{eqnarray*}
				x_i= \min_{a \in A(i)}\{\frac{1}{q_i(a)}[\sum_{k \in B}q(k|i, a)+\sum_{k \in B^c \setminus \{i\}}q(k|i,a)x_k]\},\ i\in B^c.
\end{eqnarray*}
			
\item[(ii)] There exists a policy $f^* \in F$ such that
			\begin{eqnarray*}
				f^*(i) \in \mathop{\arg\min}_{a\in A(i)}\left\{\frac{1}{q_i(a)}[\sum_{k \in B}q(k|i, a)+\sum_{k \in B^c \setminus \{i\}}q(k|i,a)h_k^*]\right \},\ \ i \in B^c,
			\end{eqnarray*}
			and $f^*$ is optimal.
\end{description}	}
\end{thm}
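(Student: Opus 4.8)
Set, for $i\in B^{c}$,
\[
m_{i}:=\min_{a\in A(i)}\frac{1}{q_{i}(a)}\Bigl[\sum_{k\in B}q(k|i,a)+\sum_{k\in B^{c}\setminus\{i\}}q(k|i,a)\,h_{k}^{*}\Bigr],
\]
the minimum being attained since $A(i)$ is finite. The plan is: first prove the inequality $h_{i}^{*}\ge m_{i}$ for every $i\in B^{c}$; then choose a minimiser $f^{*}$ of the bracket and deduce from Lemma~\ref{le3.1} that $f^{*}$ is optimal, which is (ii); and finally read the optimality equation off the resulting identity $h^{f^{*}}=h^{*}$, which completes (i).

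\emph{The bound $h_{i}^{*}\ge m_{i}$.} Fix $i\in B^{c}$ and $\pi\in\Pi_{m}$. Conditioning on the first jump of $\{X(t)\}$ out of state $i$ --- equivalently, solving the linear relation (\ref{3.2}) under the a priori constraint $0\le h_{(u,i)}^{\pi}\le 1$, which kills the exponentially growing homogeneous solution because $\int_{0}^{\infty}q_{i}(\pi_{v})\,dv=\infty$ --- gives
\[
h_{i}^{\pi}=\int_{0}^{\infty}e^{-\int_{0}^{t}q_{i}(\pi_{v})\,dv}\Bigl[\sum_{k\in B}q(k|i,\pi_{t})+\sum_{k\in B^{c}\setminus\{i\}}q(k|i,\pi_{t})\,h_{(t,k)}^{\pi}\Bigr]dt .
\]
By time--homogeneity of the rates and by (\ref{2.11}), $h_{(t,k)}^{\pi}=h_{k}^{\pi^{(t)}}\ge h_{k}^{*}$, where $\pi^{(t)}=\{\pi_{t+s}:s\ge0\}\in\Pi_{m}$. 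Since all coefficients $q(k|i,\pi_{t})$ are nonnegative and, for each $a\in A(i)$, $\sum_{k\in B}q(k|i,a)+\sum_{k\in B^{c}\setminus\{i\}}q(k|i,a)h_{k}^{*}\ge q_{i}(a)\,m_{i}$ by the very definition of $m_{i}$, averaging over $\pi_{t}(\cdot|i)$ shows the bracket is $\ge m_{i}\,q_{i}(\pi_{t})$. Because $q_{i}(a)>0$ on $B^{c}$ we have $\int_{0}^{\infty}q_{i}(\pi_{t})e^{-\int_{0}^{t}q_{i}(\pi_{v})dv}dt=1$, and therefore $h_{i}^{\pi}\ge m_{i}$; taking the infimum over $\pi\in\Pi_{m}$ yields $h_{i}^{*}\ge m_{i}$.

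\emph{Optimality of $f^{*}$ and the optimality equation.} For each $i\in B^{c}$ choose $f^{*}(i)\in\mathop{\arg\min}_{a\in A(i)}\frac{1}{q_{i}(a)}[\sum_{k\in B}q(k|i,a)+\sum_{k\in B^{c}\setminus\{i\}}q(k|i,a)h_{k}^{*}]$; this is possible as $A(i)$ is finite. The previous step shows that $(h_{i}^{*}:i\in B^{c})$ is nonnegative and satisfies $h_{i}^{*}\ge m_{i}=\frac{1}{q_{i}(f^{*}(i))}[\sum_{k\in B}q(k|i,f^{*}(i))+\sum_{k\in B^{c}\setminus\{i\}}q(k|i,f^{*}(i))h_{k}^{*}]$, i.e.\ it is a nonnegative super-solution of (\ref{3.9}) for $f^{*}$. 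Lemma~\ref{le3.1}(ii) then gives $h_{i}^{*}\ge h_{i}^{f^{*}}$ for all $i\in B^{c}$, while $h_{i}^{f^{*}}\ge\inf_{\pi\in\Pi_{m}}h_{i}^{\pi}=h_{i}^{*}$ is immediate; hence $h^{f^{*}}=h^{*}$ and $f^{*}$ is optimal, which is (ii). Finally, by Lemma~\ref{le3.1}(i) the vector $(h_{i}^{f^{*}}:i\in B^{c})$ solves the equation in (\ref{3.9}) for $f^{*}$; substituting $h^{f^{*}}=h^{*}$ and using that $f^{*}(i)$ attains $m_{i}$ gives, for every $i\in B^{c}$,
\begin{eqnarray*}
h_{i}^{*}&=&\frac{1}{q_{i}(f^{*}(i))}\Bigl[\sum_{k\in B}q(k|i,f^{*}(i))+\sum_{k\in B^{c}\setminus\{i\}}q(k|i,f^{*}(i))h_{k}^{*}\Bigr]\\
&=&m_{i}=\min_{a\in A(i)}\frac{1}{q_{i}(a)}\Bigl[\sum_{k\in B}q(k|i,a)+\sum_{k\in B^{c}\setminus\{i\}}q(k|i,a)h_{k}^{*}\Bigr],
\end{eqnarray*}
which is the asserted optimality equation and completes (i).

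\emph{Main obstacle.} The crux is the inequality $h_{i}^{*}\ge m_{i}$: one must justify the first-jump representation of $h_{i}^{\pi}$ for time-inhomogeneous Markov policies (the Markov property at the first jump time and the identification of the post-jump hitting probability with $h_{(t,k)}^{\pi}$), and then reduce the infimum over all randomised Markov policies to the minimum over the finitely many actions in $A(i)$. That reduction works only because one first relaxes $h_{(t,k)}^{\pi}\ge h_{k}^{*}$ --- which decouples the dependence on the policy after time $t$ --- and only afterwards exploits the linearity of $q(\cdot|i,\pi_{t})$ in $\pi_{t}(\cdot|i)$ together with the normalisation $\int_{0}^{\infty}q_{i}(\pi_{t})e^{-\int_{0}^{t}q_{i}(\pi_{v})dv}dt=1$. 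If one instead argues through Lemma~\ref{le3.2}(ii), the same reduction is needed, with the added technicality that (\ref{3.2}) is known only for a.e.\ $u$, so that the first-jump identity (or a right-continuity argument at the origin) is what legitimately evaluates the relation at the initial time.
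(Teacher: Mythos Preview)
Your argument is correct. The overall architecture---prove $h_i^*\ge m_i$, pick a minimiser $f^*$, invoke Lemma~\ref{le3.1}(ii) to obtain $h^{f^*}\le h^*$, hence equality, then read off the OE via Lemma~\ref{le3.1}(i)---is exactly the paper's. Where you diverge is in the proof of the key inequality $h_i^*\ge m_i$.

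The paper obtains this inequality through Lemma~\ref{le3.2}(ii), a differential inequality at $u=0$ established by a contradiction argument involving shifted policies $\pi^{(v)}$ and the infimum time $m_{\tilde\pi}$. You instead derive the first-jump integral representation directly from the integrated form of (\ref{3.2}) (using the integrating factor $e^{-\int_0^u q_i(\pi_v)dv}$ and the boundedness of $h^\pi_{(u,i)}$ to pass to $T\to\infty$), then bound the integrand pointwise via $h^\pi_{(t,k)}=h_k^{\pi^{(t)}}\ge h_k^*$ and the normalisation $\int_0^\infty q_i(\pi_t)e^{-\int_0^t q_i(\pi_v)dv}dt=1$. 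This is a genuinely more elementary route: it sidesteps the contradiction machinery of Lemma~\ref{le3.2}(ii) entirely, and the time-shift $\pi^{(t)}$ appears only in the trivial role of identifying $h^\pi_{(t,k)}$ with a value $\ge h_k^*$. What the paper's approach buys, by contrast, is that Lemma~\ref{le3.2}(ii) is stated as a general differential relation that might be reusable elsewhere; your argument is tailored to this theorem but shorter and more transparent. Both rely on $\min_{a\in A(i)}q_i(a)>0$ (from the finiteness of $A(i)$ and the standing assumption $q_i(a)>0$ on $B^c$) to ensure $\int_0^\infty q_i(\pi_v)dv=\infty$.

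One small point of rigour worth making explicit: your integral representation is cleanest when derived from the \emph{integrated} version of (\ref{3.2}) (which the paper shows holds for all $u\ge0$), rather than from the a.e.\ differential form; that way no regularity issue arises at $u=0$.
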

\begin{proof}
	{\rm	For every $\pi=\{\pi_t:t\geq 0\}\in \Pi_m$, it follows from (\ref{3.2}) and (\ref{2.11}) that
		\begin{eqnarray*}
			&&-(h^{\pi}_{(0,i)})'+q_i(\pi_0)
			h^{\pi}_i\\
			&=& \sum_{k \in B}q(k|i,\pi_0)+\sum_{k \in B^c \setminus \{i\}}q(k|i,\pi_0)
			h^{\pi}_k\\
			&\geq & \sum_{k \in B}q(k|i,\pi_0)+\sum_{k \in B^c \setminus \{i\}}q(k|i,\pi_0)
			h^*_k,\ \ i\in B^c,
		\end{eqnarray*}
i.e.,
\begin{eqnarray*}
q_i(\pi_0)(h^{\pi}_i-h^*_i)-(h^{\pi}_{(0,i)})'\geq \sum_{k \in B}q(k|i,\pi_0)+\sum_{k \in B^c}q(k|i,\pi_0)
			h^*_k,\ \  i \in B^c.
\end{eqnarray*}
Taking infimum over $\pi \in \Pi_m$ on the both sides and using Lemma~\ref{le3.2}(ii) yield that
\begin{eqnarray*}
\inf_{\pi\in \Pi_m}[\sum_{k \in B}q(k|i,\pi_0)+\sum_{k \in B^c}q(k|i,\pi_0)h^*_k]\leq \inf_{\pi \in \Pi_m}[q_i(\pi_0)(h^{\pi}_i-h^*_i)-(h^{\pi}_{(0,i)})'] \leq 0,\ \ i\in B^c.
\end{eqnarray*}
Since $A(i)$ is finite, there exists $a^*_i \in A(i)$ for every $i \in B^c$ such that
\begin{eqnarray*}
\sum_{k \in B}q(k|i,a^*_i)+\sum_{k \in B^c}q(k|i,a^*_i)
			h^*_k=\min_{a\in A(i)}[\sum_{k \in B}q(k|i,a)+\sum_{k \in B^c}q(k|i,a)
	h^*_k]\leq 0,\ \ i\in B^c.
\end{eqnarray*}
Take a policy $f^*$ satisfying
\begin{eqnarray*}
\begin{cases}
f^*(i)=a_i^*,\ & \ \text{if} \ i \in B^c\\
f^*(i) \in A(i),\ & \ \text{if} \ i \in B.
\end{cases}
\end{eqnarray*}
Then $f^* \in F$ (i.e., $f^*$ is a stationary Markov policy) and
\begin{eqnarray*}
\sum_{k \in B}q(k|i,f^*(i))+\sum_{k \in B^c}q(k|i,f^*(i))
h^*_k\leq 0,\ \ i\in B^c,
\end{eqnarray*}
i.e.,
\begin{eqnarray}\label{h^*}
h_i^* \geq \frac{1}{q_i(f^*(i))}[\sum_{k \in B}q(k|i,f^*(i))+\!\sum_{k \in B^c \setminus \{i\}}q(k|i,f^*(i))h^*_k],\ \ i \in B^c.
\end{eqnarray}
By Lemma \ref{le3.1}(i), we yield
\begin{eqnarray*}
h_i^{f^*}=\frac{1}{q_i(f^*(i))}[\sum_{k \in B}q(k|i,f^*(i))+\sum_{k \in B^c \setminus \{i\}}q(k|i,f^*(i))h_k^{f^*}],\ \ i\in B^c,
\end{eqnarray*}
from which, together with Lemma \ref{le3.1}(ii) and (\ref{h^*}), we obtain
$h^{f^*}_i\leq h^*_i$ for all $i \in B^c$.
Therefore, $h^{f^*}_i = h^*_i$ for all $i \in B^c$ (since $h_i^*=\inf_{\pi \in \Pi_m}h_i^{\pi}$). Hence,
\begin{eqnarray*}
h^*_i&\!\!=&\!\! h^{f^*}_i \!\!=\!\! \frac{1}{q_i(f^*(i))}[\sum_{k \in B}q(k|i,f^*(i))\!+\!\!\sum_{k \in B^c \setminus \{i\}} \! q(k|i,f^*(i))h^{f^*}_k]\\
			\!\!&=&\!\!\frac{1}{q_i(f^*(i))}[\sum_{k \in B}q(k|i,f^*(i))+\sum_{k \in B^c \setminus \{i\}}q(k|i,f^*(i))h^*_k]\\
			&=& \min_{a \in A(i)}\{\frac{1}{q_i(a)}[\sum_{k \in B}q(k|i,a)+\sum_{k \in B^c \setminus \{i\}}q(k|i,a)h^*_k]\},\  i\in B^c.
\end{eqnarray*}
(i) is proved. (ii) follows directly from (i).
	}
\end{proof}

\begin{rem}\label{re3.2}
\rm{(i)\ Theorem~\ref{th3.1} reveals that the minimal hitting probability $(h_i^*:\ i \in B^c)$ satisfies the OE and it can be found in $F$. Therefore, we only need to pay our attention to policies in $F$. But there also has the same problem in \cite{LGG22} that $(h_i^*:\ i \in B^c)$ is the minimal nonnegative solution of (\ref{3.9}) other than the unique one. Generally, for infinitely countable state space, (\ref{3.9}) may have multiple solutions since it involves infinite equations. However, in a special CBP considered in Section 4, such problem can be avoided.

(ii)\ Since the transition probability of the related embedded CMS is given by
\begin{eqnarray*}
	\tilde{p}(k|i,f(i)):=\begin{cases}\frac{q(k|i, f(i))}{q_i(f(i))},\ & k\neq i\\
		0,\ & k=i\\
	\end{cases}
\end{eqnarray*}
for fixed $f \in F$, by Theorem~\ref{th3.1}, the optimization problem of CTCMSs can be transformed to the optimization problem of its embedded CMSs.}
\end{rem}

\section{Application to branching processes}\label{section4}
Although the hitting behavior of CTCMSs can be transformed to discrete-time analogous, the algorithm of an optimal policy in \cite{LGG22} can not terminate in a finite number of iterations for countable state space. In this section, we will show that the minimal hitting probability for the special and important model of CBPs can be obtained precisely.

Branching process is an important class of stochastic models, see \cite{AN72,C21,EV-14,ES-17,H63,WC-18}. Recall that for a branching process, by \cite[P. 1-2]{AN72}, at any time, each particle splits independently of others, and thus from \cite[P. 96-97]{H63}, denote the transition rate as
\begin{eqnarray}\label{q(j|i)}
	q(j|i)=\begin{cases}
		ib_{j-i+1},\ & \text{if}\ j\geq i-1\\
		0,\ & \text{otherwise},
	\end{cases}
\end{eqnarray}
where $\{b_k:\ k \geq 0\}$ is the branching mechanism
satisfying $
b_1 \leq 0,\ b_k \geq 0\ (k \neq 1)$ and $ \sum\limits_{k=0}^{\infty}b_k=0$.
The generating function is defined as $
B(v):=\sum\limits_{k=0}^{\infty}b_{k}v^k$ for all $v \in [-1,1]$.

\subsection{The uniqueness of OE in branching case}
In continuous-time CBPs, the branching mechanisms are determined by action $a \in A$, and thus we denote it as $\{b_k(a):\ k \geq 0\}$. Hence, the transition rate and generating function are denoted by $q(j|i,a)$ and $B(a;v)$ for every $a \in A$, respectively. Obviously, for an action $a \in A$, if $\sum\limits_{k=2}^{\infty}b_k(a)=0$, then there is no offspring under $a \in A$. In order to avoid such trivial case, we can assume that $\sum\limits_{k=2}^{\infty}b_k(a)>0$ for all $a\in A$.

Suppose that there exists an integer $m \geq 1$ such that
\begin{eqnarray}\label{A}
	A(i)=\bar{A},\ \ i \geq m.
\end{eqnarray}
Because CBPs often describe the evolution of population, it is worth to consider the case that $B:=\{0\}$, the hitting probability $h_i^f$ is just the extinction probability $ep_i^f:=P^f(\tau_0<\infty\mid X(0)=i)$ in CBPs.
Therefore, (\ref{2.10}) can be transferred into calculating the minimal extinction probability,
\begin{eqnarray*}
	ep^*_i:=\min_{f \in F}P^f(\tau_0<\infty\mid X(0)=i),\quad i \geq 1,
\end{eqnarray*}
and its optimal policy.

For every $f \in F$, by Kolmogorov forward equation (\ref{2.8}),
\begin{eqnarray}\label{4.2}
	\sum_{j=0}^{\infty}p'_f(0,i,t,j)v^j
	=\sum_{k=1}^{\infty}p_f(0,i,t,k)kv^{k-1} B(f(k);v),\ \ v\in [0,1].
\end{eqnarray}
For any $a\in \bar{A}$, the generating function is
\begin{eqnarray*}
	B(a;v)=\sum_{k=0}^{\infty}b_k(a)v^k,\ \ v\in [0,1]
\end{eqnarray*}
and by Theorem V.10.1 in \cite{H63}, $B(a;v)=0$ has the smallest nonnegative root, $\rho(a)$.  Because $\bar{A}$ is finite, there exists $a_* \in \bar{A}$ such that
$\rho(a_*)=\min\limits_{a \in \bar{A}}\rho(a)=:\rho_* \in [0,1]$. Thus for any $a \in \bar{A}$,
\begin{eqnarray}\label{4.2.2}
	B(a; v) \geq B(a_*; \rho_*)=0,\ \ v\in [0,\rho_*].
\end{eqnarray} 
	
	On the other hand, for every $f \in F$, the extinction probability  $(ep_i^f : i \geq 1)$ satisfies $
	ep^f_i=\alpha^f_{i,m} ep^f_m\ (i \geq m+1)$,
	where $\alpha^f_{i,m}:=P^f_i(\tau_m<\infty)$ and $\tau_m:=\inf \{t\geq 0: X(t)=m\}$.
	
\begin{lem}\label{le4.1}
\rm{For every $f\in F$, $\alpha^f_{i,m}\geq \rho_*^{i-m},\ i\geq m+1$. Furthermore,  $\alpha^{f}_{i,m}=\rho_*^{i-m}$  if and only if $f(i)\in \{a\in \bar{A}:\rho(a)=\rho_*\}$ for all $i\geq m+1$.}
\end{lem}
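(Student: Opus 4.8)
\emph{Proof plan.}\quad The plan is to reduce the statement to the one‑step first‑passage probabilities $\phi_j:=\alpha^f_{j,j-1}$ ($j\ge m+1$) and to analyze a single equation they satisfy. First I would exploit that a branching process is skip‑free downwards --- from a state $i$ the only downward jump is to $i-1$ --- so that, to reach $m<i$, the process must pass successively through $i-1,i-2,\dots,m$. Applying the strong Markov property (legitimate since $f$ is deterministic stationary and the process is regular under Assumption~\ref{as2.1}) at the first‑passage times $\tau_{i-1},\tau_{i-2},\dots$ then gives the factorization $\alpha^f_{i,m}=\prod_{j=m+1}^{i}\phi_j$ for all $i\ge m+1$. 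Thus it suffices to prove $\phi_j\ge\rho_*$ for every $j\ge m+1$, with $\phi_j=\rho_*$ for all such $j$ precisely when $\rho(f(j))=\rho_*$ for all $j\ge m+1$. A first‑jump analysis of $\phi_j$ on the embedded chain at state $j$ (using $A(j)=\bar{A}$ for $j\ge m$, writing $a:=f(j)$, and using the factorization once more for the return to $j-1$ after an upward jump) yields
\[
	b_0(a)+b_1(a)\phi_j+\sum_{k\ge 2}b_k(a)\prod_{l=j}^{j+k-1}\phi_l=0,\qquad j\ge m+1,
\]
and subtracting this from $B(a;v)=b_0(a)+b_1(a)v+\sum_{k\ge 2}b_k(a)v^k$ gives, for every $v$, the rearrangement $B(a;v)=b_1(a)(v-\phi_j)+\sum_{k\ge 2}b_k(a)\bigl(v^k-\prod_{l=j}^{j+k-1}\phi_l\bigr)$.

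The lower bound is the crux. Put $\underline{\phi}:=\inf_{j\ge m+1}\phi_j$ and suppose, for contradiction, that $\underline{\phi}<\rho_*$. Then $\rho_*>0$, so every $a\in\bar{A}$ satisfies $\rho(a)\ge\rho_*>\underline{\phi}\ge 0$, hence $b_0(a)=B(a;0)>0$, and since $\rho(a)$ is the smallest nonnegative root of $B(a;\cdot)$ it follows that $B(a;\underline{\phi})>0$; as $\bar{A}$ is finite, $\varepsilon_0:=\min_{a\in\bar{A}}B(a;\underline{\phi})>0$. Evaluating the rearrangement at $v=\underline{\phi}$ and dropping the sum $\sum_{k\ge 2}b_k(a)\bigl(\underline{\phi}^k-\prod_{l=j}^{j+k-1}\phi_l\bigr)$, which is $\le 0$ because $\phi_l\ge\underline{\phi}$ and $b_k(a)\ge 0$ for $k\ge 2$, we obtain for every $j\ge m+1$
\[
	0<\varepsilon_0\le B(f(j);\underline{\phi})\le b_1(f(j))\bigl(\underline{\phi}-\phi_j\bigr)\le M\bigl(\phi_j-\underline{\phi}\bigr),
\]
where the last inequality uses $b_1\le 0$, $\underline{\phi}-\phi_j\le 0$ and $M:=\max_{a\in\bar{A}}(-b_1(a))<\infty$. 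Choosing $j$ with $\phi_j$ arbitrarily close to $\underline{\phi}$ forces $\varepsilon_0\le 0$, a contradiction; hence $\phi_j\ge\rho_*$ for all $j\ge m+1$ and $\alpha^f_{i,m}=\prod_{j=m+1}^{i}\phi_j\ge\rho_*^{\,i-m}$. The favorable sign of the discarded correction term, together with the finiteness of $\bar{A}$ (which provides both $\varepsilon_0>0$ and $M<\infty$), is exactly what makes this work, and I expect this step to be the main obstacle.

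For the equality statement, the ``if'' part follows from minimality. If $B(f(j);\rho_*)=0$ for all $j\ge m+1$, a one‑line substitution --- $\sum_{k\ge 0}b_k(f(i))\rho_*^{\,i+k-1-m}=\rho_*^{\,i-1-m}B(f(i);\rho_*)=0$ --- shows that $y_i:=\rho_*^{\,i-m}$ ($i\ge m+1$, with the convention $y_m:=1$) is a nonnegative solution of the linear first‑passage system whose minimal nonnegative solution is $(\alpha^f_{i,m})_{i\ge m+1}$; this last characterization is the skip‑free specialization of Lemma~\ref{le3.1} (for instance after rendering $\{0,1,\dots,m\}$ absorbing, which leaves $\alpha^f_{i,m}$ unchanged for $i>m$). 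Hence $\alpha^f_{i,m}\le\rho_*^{\,i-m}$, and combined with the lower bound, $\alpha^f_{i,m}=\rho_*^{\,i-m}$.

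Conversely, if $\alpha^f_{i,m}=\rho_*^{\,i-m}$ for all $i\ge m+1$ and $\rho_*>0$, the factorization forces $\phi_j=\alpha^f_{j,m}/\alpha^f_{j-1,m}=\rho_*$ for each $j\ge m+1$; substituting $\phi_j\equiv\rho_*$ into the displayed equation collapses it to $B(f(j);\rho_*)=0$, and since $\rho_*$ is then a nonnegative root of $B(f(j);\cdot)$ with $\rho_*\le\rho(f(j))$ and $\rho(f(j))$ the smallest such root, we conclude $\rho(f(j))=\rho_*$, i.e.\ $f(j)\in\{a\in\bar{A}:\rho(a)=\rho_*\}$. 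The degenerate case $\rho_*=0$ --- in which $\rho_*^{\,i-m}=0$ for $i>m$ and the factorization no longer isolates the individual $\phi_j$ --- would be treated separately.
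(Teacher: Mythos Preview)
Your argument is correct and follows a genuinely different route from the paper's. The paper works analytically: it writes the Kolmogorov forward equation for the stopped process $\{X_{t\wedge\tau_m}\}$ in generating-function form, uses the pointwise inequality $B(a;v)\ge 0$ for $v\in[0,\rho_*]$ (their (4.2.2)) to bound the time-integral from below by $0$, and then lets $t\to\infty$ together with transience of the states $j\ge m+1$ to conclude $\alpha^f_{i,m}\rho_*^m\ge\rho_*^i$. The ``if and only if'' part is simply declared obvious. Your approach, by contrast, is purely probabilistic on the embedded chain: the skip-free factorization $\alpha^f_{i,m}=\prod_{j=m+1}^i\phi_j$ reduces the problem to a one-parameter family of scalars, and the contradiction via $\underline\phi=\inf_j\phi_j$ exploits exactly the convexity/sign structure of $B(a;\cdot)$ that the paper uses, but in a localized algebraic way rather than through an integrated PDE. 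What you gain is an elementary argument that avoids justifying the forward equation for the stopped process and that makes the equality case transparent (your ``only if'' direction is an actual argument, not a declaration). What the paper's approach buys is brevity once the generating-function machinery is in place, and it treats all $i\ge m+1$ at once without needing the infimum trick or the finiteness of $\bar A$ for an $M$-bound. Your honest flag on the $\rho_*=0$ case is appropriate: the ``only if'' direction as literally stated can fail there (e.g.\ if $b_0(f(m+1))=0$ but $\rho(f(m+2))>0$), so this is a genuine edge case that the paper's ``obvious'' also glosses over.
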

\begin{proof}	
Let $\tilde{p}_f(s,i,t,k)$ be the transition probability of $\{X_{t \wedge \tau_m}:\ t \geq 0\}$.
It follows from (\ref{4.2}) that for any $i\geq m+1$,
\begin{eqnarray}\label{L-P}
	\sum\limits_{j=m}^{\infty}\frac{\partial \tilde{p}_f(0,i,t,j)}{\partial t}v^j=\sum\limits_{k=m+1}^{\infty}k v^{k-1}\tilde{p}_f(0,i,t,k)
	B(f(k);v),\ \ v\in [0,1),
\end{eqnarray}
where $B(f(k);v)=\sum\limits_{j=0}^{\infty}b_{j}(f(k))v^j$. Since $B(f(k);v)\geq 0$ for $v\in [0,\rho_*)$, integrating the above equality, using dominated convergence theorem and Fubini theorem; see \cite{AN72,C21,EV-14,ES-17,WC-18}. We will study it in this section., then letting $t \rightarrow \infty$, we yield that for any $i\geq m+1$,
\begin{eqnarray*}
&&\lim_{t \rightarrow \infty}\sum_{j=m}^{\infty}\tilde{p}_f(0,i,t,j)v^j-v^i\\
&=& \sum_{k=m+1}^{\infty}\int_0^{\infty}\tilde{p}_f(0,i,t,k)dtB(f(k);v)kv^{k-1}\\
&\geq& \sum_{k=m+1}^{\infty}\int_0^{\infty}\tilde{p}_f(0,i,t,k)dtB(a_*; \rho_*)kv^{k-1} = 0, \ \ v\in [0,\rho_*).
\end{eqnarray*}
Moreover, by \cite[P. 107]{AN72}, each state $j \geq m+1$ is transient. Hence,
	\begin{eqnarray*}
		\lim_{t \rightarrow \infty}\tilde{p}_f(0,i,t,m)v^m-v^i \geq 0\quad \forall v \in [0,\rho_*],\ i\geq m+1.
	\end{eqnarray*}
Letting $v=\rho_*$ yields
	$\lim\limits_{t \rightarrow \infty}\tilde{p}_f(0,i,t,m)\rho_*^m-\rho_*^i \geq 0$ for $i\geq m+1$.
Therefore, $\alpha^f_{i,m}\geq\rho_*^{i-m},\ \ i\geq m+1$.
Finally, the last assertion is obvious.
\end{proof}
Let
$F_m:=\{f \in F:\ \rho(f(i))=\rho_*\ \text{for\ all}\ i \geq m+1\}$.
By Theorem \ref{th3.1} and Lemma \ref{le4.1}, the optimal policy $f^*$ is  in $F_m$. Thus, we only need to consider policies in $F_m$. However, $F_m$ can be an infinite set of policies since there may be multiple $a\in \bar{A}$ satisfying $\rho(a)=\rho_*$. For convenience, we choose a fixed $a_*\in \bar{A}$ such that $\rho(a_*)=\rho_*$ and denote
\begin{eqnarray*}
F_m(a_*)=\{f \in F:\ f(i)=a_*\ \text{for\ all}\ i \geq m+1\}.
\end{eqnarray*}
Obviously, $F_m(a_*)$ is a finite subset of $F_m$. By Theorem \ref{th3.1} and Lemma \ref{le4.1}, $F_m(a_*)$ and $F_m$ are same in sense of minimizing the extinction probability $(ep_i^f:i\geq 1)$. Therefore, in the following, we only consider policies in $F_m(a_*)$.

The following lemma will be used in the proof of Theorem \ref{th4.1}.

\begin{lem}\label{le4.2a}
	{\rm Let $U=(u_{ij})_{n\times n}$ be a nonnegative $n\times n$ matrix satisfying that
		(1)\ $u_{ii}=u_{ij}=0$ for all $i\geq 1, j <i-1$,
		(2)\ $\sum\limits_{j=2}^nu_{1j}<1$, $\sum\limits_{j=i-1}^nu_{ij}\leq 1$ for all $2\leq i\leq n$ and
		(3) $u_{i i-1}>0$ for all $2\leq i\leq n$.
		Then $I_n-U$ is invertible ($I_n$ is a $n \times n$ identity matrix).
}
\end{lem}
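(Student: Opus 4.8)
The plan is to show directly that $1$ is not an eigenvalue of $U$, i.e.\ that the only vector $x$ with $x=Ux$ is $x=0$; since $I_n-U$ is a real matrix, this is equivalent to its invertibility. So suppose, for contradiction, that $I_n-U$ were singular, pick a nonzero real $x=(x_1,\dots,x_n)^{\top}$ with $x=Ux$, and set $M:=\max_{1\le i\le n}|x_i|>0$. The key structural facts I will exploit are that $U$ is substochastic (nonnegative with row sums $\le 1$) with a \emph{strict} defect in the first row, that $u_{ii}=0$, and that the nonzero pattern is ``upper Hessenberg'' with a strictly positive subdiagonal. A maximum-modulus (discrete maximum-principle) argument then forces the extreme value $M$ to propagate down the subdiagonal and ``leak out'' through row $1$, producing the contradiction.

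Concretely I would carry out three steps. \emph{Step 1:} using $u_{11}=0$ and hypothesis (2), estimate $|x_1|=\bigl|\sum_{j=2}^n u_{1j}x_j\bigr|\le M\sum_{j=2}^n u_{1j}<M$, so the maximum modulus is never attained at the first coordinate. \emph{Step 2:} choose an index $i^{\ast}$ with $|x_{i^{\ast}}|=M$; by Step 1, $i^{\ast}\ge 2$. Writing the $i^{\ast}$-th equation and using hypothesis (1) (the row is supported on columns $\ge i^{\ast}-1$, and $u_{i^{\ast}i^{\ast}}=0$) together with hypothesis (2), we get $M=|x_{i^{\ast}}|\le\sum_{j\ge i^{\ast}-1}u_{i^{\ast}j}|x_j|\le M\sum_{j\ge i^{\ast}-1}u_{i^{\ast}j}\le M$, so every inequality is an equality. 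From the equality case, combined with $0\le|x_j|\le M$, it follows that $|x_j|=M$ for every $j$ with $u_{i^{\ast}j}>0$; since hypothesis (3) gives $u_{i^{\ast},i^{\ast}-1}>0$, we conclude $|x_{i^{\ast}-1}|=M$. \emph{Step 3:} iterate Step 2 at $i^{\ast}-1,\ i^{\ast}-2,\ \dots$, which propagates the value $M$ all the way down to $|x_1|=M$, contradicting Step 1. Hence $M=0$, so $x=0$, and $I_n-U$ is invertible.

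The only genuinely delicate point is the equality-case analysis in Step 2 and verifying that the downward induction in Step 3 really reaches index $1$: this is precisely where hypotheses (1) and (3) are both used — (1) ensures row $i$ ``reaches down'' no further than column $i-1$, while (3) ensures it does reach column $i-1$, so the chain of indices carrying the maximum cannot stall before hitting $1$ — and hypothesis (2)'s strict first-row inequality is what finally delivers the contradiction. (One could alternatively expand $\det(I_n-U)$ along the last row and induct on $n$, but tracking the Hessenberg structure through the relevant minors is more cumbersome, so I would keep the maximum-principle argument.)
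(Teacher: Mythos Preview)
Your proof is correct. The maximum-principle argument is sound: Step~1 uses the strict first-row defect to rule out the maximum at index~$1$; Step~2 correctly extracts the equality case (from $\sum_j u_{i^\ast j}(M-|x_j|)=0$ and nonnegativity one gets $|x_j|=M$ whenever $u_{i^\ast j}>0$, hence $|x_{i^\ast-1}|=M$ by~(3)); and Step~3 walks the subdiagonal down to index~$1$, contradicting Step~1.

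The paper's proof is different in spirit: it simply says ``By mathematical induction, we can complete the proof,'' i.e.\ an induction on $n$ exploiting the upper-Hessenberg pattern (expanding along the last row/column and reducing to the $(n-1)\times(n-1)$ principal submatrix, which inherits hypotheses (1)--(3)). Your maximum-modulus argument is more conceptual and makes the role of each hypothesis transparent --- (2)'s strict first-row inequality gives the ``leak,'' (1) and (3) together guarantee propagation one step down --- whereas the inductive determinant route is more mechanical but requires no eigenvector heuristics. Both are short; your version arguably generalizes more readily (e.g.\ to other substochastic matrices with an irreducibility-type link to a defective row).
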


\begin{proof}
	By mathematical induction,  we can complete the proof.
\end{proof}

The following theorem reveals that there are two cases: the extinction probability $(ep_i^f :\ 1 \leq i \leq m)$ is the unique solution of the following equation (\ref{4.4b}) or (\ref{4.4}).  Denote $L(i,a)=\sum\limits_{j=m}^{\infty}\tilde{p}(j|i, a)\rho_*^{j-m}$ for $1\leq i\leq m$ and $a\in A(i)$.

\begin{thm}\label{th4.1}
\rm{Suppose $f \in F_m(a_*)$.
\begin{description}
	\item[(i)]\ If $\min\limits_{1\leq i\leq m}b_0(f(i))>0$, then
			$(ep_i^f :\ 1 \leq i \leq m)$ is the unique solution of
\begin{eqnarray}\label{4.4b}
\begin{cases}
x_i=\tilde{p}(0|i, f(i))+\sum\limits_{j=1}^{m-1}\tilde{p}(j|i, f(i))x_j+L(i,f(i))x_m,\ & 1 \leq i \leq m\\
					0\leq x_i\leq 1,\ & 1 \leq i \leq m,
\end{cases}
\end{eqnarray}
where $\rho_*=\min\limits_{a\in\bar{A}}\rho(a)$.
Furthermore,  $(ep_i^f :\ i \geq m+1)$ satisfies
\begin{eqnarray}\label{4.6b}
				ep_i^f = \rho_*^{i-m}ep_m^f,\quad i \geq m+1.
\end{eqnarray}
			
\item[(ii)] If $\min\limits_{1\leq i\leq m}b_0(f(i))=0$, then
			$(ep_i^f :\ 1 \leq i \leq i_0-1)$ is the unique solution of
\begin{eqnarray}\label{4.4}
\begin{cases}
x_i=\tilde{p}(0|i, f(i))+\sum\limits_{j=1}^{i_0-1}\tilde{p}(j|i, f(i))x_j,\ & 1 \leq i \leq i_0-1,\\
0\leq x_i\leq 1,\ & 1 \leq i \leq i_0-1,
\end{cases}
\end{eqnarray}
where $i_0:=\min\limits_{1\leq i\leq m}\{i:b_0(f(i))=0\}$.
Furthermore, $ep_i^f=0,\ \text{for\ all}\ i\geq i_0$.
\end{description}	}
\end{thm}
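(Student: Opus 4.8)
The plan is to treat parts (i) and (ii) in parallel, in each case by the same two-stage argument: first I would exhibit $(ep_i^f)$ as a solution of the stated finite linear system by collapsing the infinite probabilistic system of Lemma~\ref{le3.1}, and then I would show that finite system has a \emph{unique} solution by means of Lemma~\ref{le4.2a}.

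For the first stage, I would start from Lemma~\ref{le3.1}(i) with $B=\{0\}$; writing the rates in branching form $q(j|i,a)=ib_{j-i+1}(a)$ (so that $\tilde p(j|i,a)=0$ for $j<i-1$) and using $\tilde p(i|i,\cdot)=0$, this reads $ep_i^f=\tilde p(0|i,f(i))+\sum_{k\geq1}\tilde p(k|i,f(i))ep_k^f$ for every $i\geq1$. I would then use that $f\in F_m(a_*)$: since $f(i)=a_*$, hence $\rho(f(i))=\rho_*$, for all $i\geq m+1$, Lemma~\ref{le4.1} gives $\alpha^f_{i,m}=\rho_*^{\,i-m}$ for $i\geq m+1$, which together with the identity $ep_i^f=\alpha^f_{i,m}ep_m^f$ ($i\geq m+1$) yields $ep_k^f=\rho_*^{\,k-m}ep_m^f$ for all $k\geq m$; this already proves (\ref{4.6b}). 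In case (i) I substitute this into the tail $\sum_{k\geq m}\tilde p(k|i,f(i))ep_k^f=L(i,f(i))ep_m^f$ and land exactly on (\ref{4.4b}) with $x_i=ep_i^f$, the bounds $0\leq ep_i^f\leq1$ being automatic. In case (ii) I would first establish $ep_i^f=0$ for $i\geq i_0$: since the rates are skip-free to the left ($q(k|j,a)=0$ for $k\leq j-2$) and $q(i_0-1\mid i_0,f(i_0))=i_0b_0(f(i_0))=0$, a process started in any state $\geq i_0$ can never jump below $i_0$, hence stays $\geq i_0\geq1$ forever and $\tau_0=\infty$ almost surely; feeding $ep_k^f=0$ ($k\geq i_0$) into the displayed identity for $1\leq i\leq i_0-1$ removes the tail and leaves (\ref{4.4}).

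For the uniqueness stage, the linear part of (\ref{4.4b}) is $x=c+Ux$ with $c_i=\tilde p(0|i,f(i))$, $u_{ij}=\tilde p(j|i,f(i))$ for $j<m$ and $u_{im}=L(i,f(i))$, while that of (\ref{4.4}) is $x=c+Vx$ with $V=\big(\tilde p(j|i,f(i))\big)_{1\leq i,j\leq i_0-1}$. In both cases the matrix is nonnegative, vanishes for $j<i-1$ by the branching structure, has strictly positive subdiagonal entry $ib_0(f(i))/q_i(f(i))$ (this is exactly where the hypothesis $b_0(f(i))>0$ — valid for $1\leq i\leq m$ in case (i), for $1\leq i\leq i_0-1$ in case (ii) — is used), and has row sums bounded by $\sum_{k\geq1}\tilde p(k|i,f(i))=1-\tilde p(0|i,f(i))$, which is $<1$ for $i=1$ because $\tilde p(0\mid1,f(1))=b_0(f(1))/q_1(f(1))>0$, and $\leq1$ otherwise. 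These are precisely the hypotheses of Lemma~\ref{le4.2a}, so $I-U$ and $I-V$ are invertible; since $(ep_i^f)$ is a solution of the corresponding system, it must be the unique one.

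The hard part will be the diagonal entry $u_{mm}=L(m,f(m))$ in case (i): it equals $\sum_{k\geq m+1}\tilde p(k|m,f(m))\rho_*^{k-m}$ and is positive as soon as $\rho_*>0$, so Lemma~\ref{le4.2a} cannot be invoked verbatim for $U$. I would remove it by rescaling the $m$-th equation by $1-u_{mm}$; one first checks $1-u_{mm}>0$ — again a consequence of $b_0(f(m))>0$, which forces $u_{mm}\leq1-\tilde p(m-1\mid m,f(m))<1$ — and then verifies that the rescaled coefficient matrix still meets all conditions of Lemma~\ref{le4.2a}, so invertibility is unaffected. A smaller point needing care is the rigorous derivation, via the embedded chain and the strong Markov property, of $ep_i^f=0$ for $i\geq i_0$, together with the degenerate sub-cases $i_0=1$ (empty system, $ep_i^f\equiv0$) and $m=1$, which I would dispose of separately.
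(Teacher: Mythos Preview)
Your proposal is correct and follows the paper's proof almost exactly: both arguments first use Lemma~\ref{le3.1}(i) together with Lemma~\ref{le4.1} (via the identity $ep_i^f=\alpha^f_{i,m}\,ep_m^f$) to collapse the infinite probabilistic system to the finite system (\ref{4.4b}) or (\ref{4.4}), and then prove uniqueness by showing $I-U$ (resp.\ $I-V$) is invertible. For case~(ii) your skip-free-to-the-left argument that the process started in $[i_0,\infty)$ can never reach $0$ is exactly the paper's.

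The one genuine divergence is in how you dispose of the nonzero diagonal entry $u_{mm}=L(m,f(m))$ in case~(i). The paper does \emph{not} rescale; instead it splits on the value of $\rho_*$: when $\rho_*=0$ one has $u_{mm}=0$ and Lemma~\ref{le4.2a} applies directly, and when $\rho_*>0$ the paper argues that $\tilde P^f_{1,m}$ is irreducible, substochastic, with first row sum strictly below $1$, and invokes the standard Perron--Frobenius/$M$-matrix fact that this forces the spectral radius below $1$, hence $I_m-\tilde P^f_{1,m}$ is invertible. Your rescaling trick (divide the $m$-th equation through by $1-u_{mm}>0$ and verify the resulting matrix still meets all three hypotheses of Lemma~\ref{le4.2a}) is a legitimate alternative and arguably more self-contained, since it stays entirely within Lemma~\ref{le4.2a} and avoids appealing to spectral theory; the paper's irreducibility route, on the other hand, is shorter once that external fact is granted.
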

\begin{proof}
	See the Appendix.
\end{proof}

We now describe the optimal policies in $F_m(a_*)$. For this purpose, let $\mathscr{M}:=\{i:1\leq i\leq m, \min\limits_{a\in A(i)}b_0(a)=0\}$. Define
\begin{eqnarray}\label{m_*}
	m_*=\min\limits\{i:i\in \mathscr{M}\}\ \text{if}\ \mathscr{M}\neq \emptyset\ \text{or}\ m+1\ \text{if}\ \mathscr{M}=\emptyset.
\end{eqnarray}

The following Theorem~\ref{th4.2} presents the sufficient and necessary condition of an optimal policy in two different cases: $m_*=m+1$ and $m_*\geq m$.

\begin{thm}\label{th4.2}
	\rm{
\begin{description}
\item[(i)]\  If $m_*=m+1$, then $\tilde{f}\ (\in F_m(a_*))$ is optimal if and only if $(ep^{\tilde{f}}_i: 1 \leq i \leq m)$ is the unique solution of the optimality equation (OE-1)
\begin{eqnarray}\label{4.13}
\begin{cases}x_i=\min\limits_{a \in A(i)}\{\tilde{p}(0|i, a)+\sum\limits_{j=1}^{m-1}\tilde{p}(j|i, a)x_j+L(i,a)x_m\}, \ &1 \leq i \leq m,\\
					0 \leq x_i \leq 1, & 1 \leq i \leq m,
\end{cases}
\end{eqnarray}
and $ep_i^{\tilde{f}}=\rho_*^{i-m}ep_m^{\tilde{f}},\ i \geq m+1$.

	\item[(ii)]\  If $m_*\leq m$, then $\tilde{f}\ (\in F_m(a_*))$ is optimal if and only if $(ep^{\tilde{f}}_i: 1 \leq i \leq m_*-1)$ is the unique solution of the optimality equation (OE-2)
\begin{eqnarray}\label{4.13a}
\begin{cases}x_i=\min\limits_{a \in A(i)}\{\tilde{p}(0|i, a)+\sum\limits_{j=1}^{m_*-1}\tilde{p}(j|i, a)x_j\},\ & 1 \leq i \leq m_*-1,\\
					0 \leq x_i \leq 1, \ & 1 \leq i \leq m_*-1,
\end{cases}
\end{eqnarray}
and $ep^{\tilde{f}}_i=0$ for all $ i \geq m_*$.
\end{description}	}
\end{thm}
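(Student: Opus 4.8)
The plan is to reduce the problem to $F_m(a_*)$ — legitimate by Theorem~\ref{th3.1} together with Lemma~\ref{le4.1}, which say that an optimal policy exists, lies in $F$, and may be taken in $F_m(a_*)$ — and then to couple the optimality equation of Theorem~\ref{th3.1} with the structural information in Theorem~\ref{th4.1}. Writing $B=\{0\}$ and $\tilde p(j|i,a)=q(j|i,a)/q_i(a)$, Theorem~\ref{th3.1}(i) reads $ep^*_i=\min_{a\in A(i)}\{\tilde p(0|i,a)+\sum_{k\geq1}\tilde p(k|i,a)ep^*_k\}$ for $i\geq1$ (the exclusion $k\neq i$ is vacuous since $\tilde p(i|i,a)=0$ for branching rates), and Lemma~\ref{le3.1}(i) gives the same identity with the minimum removed and $ep^*$ replaced by $ep^f$. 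In case (i), $m_*=m+1$ forces $b_0(a)>0$ for every $a\in A(i)$ and every $1\leq i\leq m$, so Theorem~\ref{th4.1}(i) applies to every $f\in F_m(a_*)$ and gives $ep^f_k=\rho_*^{k-m}ep^f_m$ for $k\geq m+1$; in particular $ep^*_k=\rho_*^{k-m}ep^*_m$ for $k\geq m+1$. In case (ii) one first manufactures $\hat f\in F_m(a_*)$ with $\hat f(m_*)\in A(m_*)$ chosen so that $b_0(\hat f(m_*))=0$ (possible since $m_*\in\mathscr M$), $\hat f(i)$ arbitrary for $m_*<i\leq m$, and $\hat f(i)=a_*$ for $i\geq m+1$; then $i_0=m_*$ for $\hat f$ and Theorem~\ref{th4.1}(ii) yields $ep^{\hat f}_i=0$ for $i\geq m_*$, whence $ep^*_i=0$ for all $i\geq m_*$.

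\textbf{Necessity.} Suppose $\tilde f\in F_m(a_*)$ is optimal, so $ep^{\tilde f}_i=ep^*_i$. Substituting the boundary values just obtained ($ep^*_k=\rho_*^{k-m}ep^*_m$ for $k\geq m+1$ in case (i); $ep^*_k=0$ for $k\geq m_*$ in case (ii)) into the optimality equation, and using $L(i,a)=\sum_{j\geq m}\tilde p(j|i,a)\rho_*^{j-m}$, collapses it over the finitely many indices $1\leq i\leq m$ (resp. $1\leq i\leq m_*-1$) to exactly OE-1 in (\ref{4.13}) (resp. OE-2 in (\ref{4.13a})), while the displayed boundary relations were already established. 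It remains to check that OE-1 and OE-2 have \emph{unique} solutions in $[0,1]^m$ (resp. $[0,1]^{m_*-1}$). If $x,y$ both solve OE-1, choose for each $i$ an action $b_i\in A(i)$ attaining the minimum at $y$; then $y_i$ equals the corresponding affine expression while $x_i\leq$ it, so $x_i-y_i\leq\sum_{j=1}^{m-1}\tilde p(j|i,b_i)(x_j-y_j)+L(i,b_i)(x_m-y_m)$, and since for $i\leq m$ the coefficient sum is $\sum_{j=1}^{m-1}\tilde p(j|i,b_i)+L(i,b_i)\leq 1-\tilde p(0|i,b_i)<1$ (here $\tilde p(0|i,b_i)=ib_0(b_i)/q_i(b_i)>0$ because $i\leq m<m_*$), evaluating at an index maximising $x-y$ forces $x\leq y$; by symmetry $x=y$. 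The same argument with $m$ replaced by $m_*-1$ and the $L$-column dropped settles OE-2 (again $i\leq m_*-1$ gives $b_0(b_i)>0$); alternatively Lemma~\ref{le4.2a} applies directly to (\ref{4.13a}).

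\textbf{Sufficiency.} Conversely, assume $(ep^{\tilde f}_i)$ solves OE-1 (resp. OE-2) and satisfies the stated boundary relation. For arbitrary $g\in F_m(a_*)$, Lemma~\ref{le3.1}(i) gives $ep^g_i=\tilde p(0|i,g(i))+\sum_{k\geq1}\tilde p(k|i,g(i))ep^g_k$, which in case (i), after inserting $ep^g_k=\rho_*^{k-m}ep^g_m$ for $k\geq m+1$ (Theorem~\ref{th4.1}(i)), becomes the affine identity (\ref{4.4b}); in case (ii) we keep it for $1\leq i\leq m_*-1$ as $ep^g_i=\tilde p(0|i,g(i))+\sum_{j=1}^{m_*-1}\tilde p(j|i,g(i))ep^g_j+\sum_{j\geq m_*}\tilde p(j|i,g(i))ep^g_j$ with the last sum $\geq0$. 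Subtracting the minimum-inequality for $ep^{\tilde f}$ evaluated at action $g(i)$ gives, with $d_i:=ep^g_i-ep^{\tilde f}_i$, a system $d_i\geq\sum_j(\text{nonnegative coeff.})d_j$ over the finite index range in which every row's coefficient sum is $<1$; evaluating at an index minimising $d$ forces $d\geq0$ there, and the boundary relations give $d\geq0$ at the remaining states ($d_i=\rho_*^{i-m}d_m\geq0$ in case (i); $d_i=ep^g_i\geq0$ in case (ii)). Hence $ep^g\geq ep^{\tilde f}$ pointwise for every $g\in F_m(a_*)$, and since an optimal policy lies in $F_m(a_*)$ we conclude $ep^{\tilde f}=ep^*$, i.e.\ $\tilde f$ is optimal.

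\textbf{Main obstacle.} The only place anything beyond bookkeeping happens is the comparison/uniqueness step: one must exploit that, on exactly the ranges $\{1,\dots,m\}$ (when $m_*=m+1$) and $\{1,\dots,m_*-1\}$ (when $m_*\leq m$), the relevant matrices are substochastic with a strictly positive ``leak to $0$'' in every row (because $b_0(a)>0$ there for all admissible $a$), so a nonnegative sub/super-solution of the associated homogeneous inequality must vanish — this is what pins down the solution of OE-1/OE-2 and drives both directions. Getting the case split clean — the construction of $\hat f$ realising $ep^*_i=0$ for $i\geq m_*$, and the verification that the argmin-policy attached to a candidate solution of OE-1/OE-2 lands in the branch of Theorem~\ref{th4.1} one needs — requires care, but involves no hard estimates.
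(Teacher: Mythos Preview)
Your overall architecture matches the paper's: reduce to $F_m(a_*)$, use Theorem~\ref{th3.1} plus the boundary identities from Theorem~\ref{th4.1} to collapse the OE to a finite system, and prove both directions by a comparison argument on that finite system. The paper's sufficiency direction compares $\tilde f$ only with an optimal $f^*$ and invokes invertibility of $I-\tilde P^{\tilde f}_{1,m}$ (via Lemma~\ref{le4.2a} or irreducibility), whereas you compare with every $g\in F_m(a_*)$ via a maximum/minimum principle; either route is fine once the comparison step is carried out correctly.

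However, your comparison step contains a concrete error. You write $\tilde p(0|i,b_i)=ib_0(b_i)/q_i(b_i)>0$ and deduce that every row of the coefficient matrix has total mass strictly below~$1$. But in the branching model $q(j|i,a)=0$ for $j<i-1$, so $\tilde p(0|i,a)=0$ for every $i\geq 2$; the quantity $ib_0(a)/q_i(a)$ is $\tilde p(i-1|i,a)$, not $\tilde p(0|i,a)$. Consequently, for $2\leq i\leq m$ the row sum $\sum_{j=1}^{m-1}\tilde p(j|i,a)+L(i,a)$ can equal $1$ (e.g.\ when $\rho_*=1$, or when all offspring mass at action $a$ lies at $j\leq m$), and your one-line maximum/minimum principle ``evaluate at the extremal index'' does not yield a contradiction. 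The same defect recurs in the OE-2 argument and in your ``Main obstacle'' paragraph, where you assert a strictly positive leak to $0$ in \emph{every} row; in fact only row $i=1$ has that leak.

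The repair is exactly the structural fact the paper exploits: for $1\leq i\leq m$ (resp.\ $1\leq i\leq m_*-1$) one has $\tilde p(i-1|i,a)=b_0(a)/(-b_1(a))>0$, so the subdiagonal of the coefficient matrix is strictly positive while row $1$ is strictly substochastic. With this you can either (a) run your extremum argument with one extra step---if the maximum of $x-y$ is attained at $i^*\geq 2$ with row sum equal to $1$, equality in the inequality forces it to be attained at $i^*-1$ as well, and descending to $i=1$ gives the contradiction---or (b) invoke Lemma~\ref{le4.2a} (or, when $\rho_*>0$, irreducibility) to get $(I-U)^{-1}\geq 0$ and conclude directly, which is what the paper does. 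Once this is fixed, both your uniqueness and sufficiency arguments go through.
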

\begin{proof}
	See the Appendix.
\end{proof}

\subsection{Improved policy iteration algorithm}
So far, the uniqueness of the solution of (\ref{4.13}) and (\ref{4.13a}) have been given, under which, this subsection considers an improved-policy iteration algorithm. To strictly improve the policies in $F_m$, we present the following theorem.

\begin{thm}\label{th4.3}
	\rm{Given $f \in F_m(a_*)$. For $1 \leq i \leq m$,
\begin{description}
	\item[(i)]\  if $m_*=m+1$, then let
			\small{\begin{eqnarray*}
				A^1_f(i)\!:=\!
				\{a \in A(i):\! ep_i^f>\tilde{p}(0|i, a)\!+\!\sum_{j=1}^{m-1}\tilde{p}(j|i, a)ep^f_j \! +\! L(i,a)ep^f_m\}.
\end{eqnarray*}}
Define a policy $\tilde{f}$ as follows: for $1 \leq i \leq m$, 		$\tilde{f}(i):=f(i)$ if $A^1_f(i)=\emptyset$ and $\tilde{f}(i)\in\mathop{\rm{argmin}}_{a\in A^1_f(i)}\{\tilde{p}(0|i, a)+\sum_{j=1}^{m-1}\tilde{p}(j|i, a)ep^f_j+L(i,a)ep^f_m\}$ if $A^1_f(i)\neq \emptyset$.
			Then $ep_i^{\tilde{f}}\leq ep_i^f (1 \leq i \leq m)$. Moreover, if $\tilde{f} \neq f$, then $ep_i^{\tilde{f}}<ep_i^f$ for some $1 \leq i \leq m$.
			
\item[(ii)] If $m_* \leq m$, then let
			\begin{eqnarray*}
				A^2_f(i):=\{a \in A(i):\ ep_i^f>\tilde{p}(0|i, a)+\sum_{j=1}^{m_*-1}\tilde{p}(j|i, a)ep^f_j\}.
			\end{eqnarray*}
			Define a policy $\tilde{f}$  as follows: for $1 \leq i \leq m_*$, $\tilde{f}(i):=f(i)$ if $A^2_f(i)=\emptyset$ and $\tilde{f}(i)\in\mathop{\rm{argmin}}_{a\in A^2_f(i)}\{\tilde{p}(0|i, a)+\sum_{j=1}^{m_*-1}\tilde{p}(j|i, a)ep^f_j\}$ if $A^2_f(i)\neq \emptyset$.
			Then $ep_i^{\tilde{f}}\leq ep_i^f (1 \leq i \leq m_*)$. Moreover, if $\tilde{f} \neq f$, then $ep_i^{\tilde{f}}<ep_i^f$ for some $1 \leq i \leq m_*$.
\end{description}}
\end{thm}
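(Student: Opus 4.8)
The plan is to treat both parts by a single template: verify that the full vector $(ep_i^f:i\ge 1)$ is a nonnegative supersolution of the probabilistic equation system associated, via Lemma~\ref{le3.1} with $B=\{0\}$, to the improved policy $\tilde f$, namely
\[
ep_i^f\ \ge\ \tilde p(0|i,\tilde f(i))+\sum_{k\ge 1,\ k\neq i}\tilde p(k|i,\tilde f(i))\,ep_k^f,\qquad i\ge 1,
\]
and then apply Lemma~\ref{le3.1}(ii) to conclude $ep_i^{\tilde f}\le ep_i^f$ for all $i$; strictness when $\tilde f\neq f$ will follow from a short contradiction using the explicit systems of Theorem~\ref{th4.1}. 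Before starting, note that in both (i) and (ii) $\tilde f$ is changed only on $\{1,\dots,m\}$ (on $\{1,\dots,m_*\}$ in (ii)), so $\tilde f(i)=f(i)=a_*$ for $i\ge m+1$, i.e. $\tilde f\in F_m(a_*)$; consequently, by Lemma~\ref{le4.1} and the identity $ep_i^g=\alpha^g_{i,m}ep_m^g$, we have $ep_i^g=\rho_*^{\,i-m}ep_m^g$ for $i\ge m+1$ with $g=f$ and $g=\tilde f$, which lets us rewrite the tail of the supersolution inequality as $\sum_{k\ge m}\tilde p(k|i,a)ep_k^f=L(i,a)ep_m^f$.

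\emph{Part (i) ($m_*=m+1$).} Here $\mathscr{M}=\emptyset$, so $b_0(a)>0$ for every $a\in A(i)$ and every $1\le i\le m$, and Theorem~\ref{th4.1}(i) applies to both $f$ and $\tilde f$. I would check the supersolution inequality state by state. For $i\ge m+1$, $\tilde f(i)=f(i)$, and the inequality is just the $f$-equation of Lemma~\ref{le3.1}(i), with equality. For $1\le i\le m$, using the tail rewriting above and $\tilde p(i|i,\cdot)=0$, the right-hand side equals $\tilde p(0|i,\tilde f(i))+\sum_{j=1}^{m-1}\tilde p(j|i,\tilde f(i))ep_j^f+L(i,\tilde f(i))ep_m^f$, which equals $ep_i^f$ by (\ref{4.4b}) if $A^1_f(i)=\emptyset$ (then $\tilde f(i)=f(i)$), and is $<ep_i^f$ if $A^1_f(i)\neq\emptyset$ by the definition of $A^1_f(i)$ and the choice of $\tilde f(i)$ as a minimiser over $A^1_f(i)$. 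Hence $(ep_i^f)_{i\ge1}$ is a nonnegative supersolution for $\tilde f$ and $ep_i^{\tilde f}\le ep_i^f$. If $\tilde f\neq f$, take $i_1\le m$ with $\tilde f(i_1)\neq f(i_1)$, so $\tilde f(i_1)\in A^1_f(i_1)$ and the inequality at $i_1$ is strict; if we had $ep_i^{\tilde f}=ep_i^f$ for all $1\le i\le m$, then by the geometric tail $ep^{\tilde f}=ep^f$ everywhere, and substituting into (\ref{4.4b}) written for $\tilde f$ at $i_1$ contradicts that strict inequality. So $ep_{i_1}^{\tilde f}<ep_{i_1}^f$ for some $i_1$.

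\emph{Part (ii) ($m_*\le m$).} The same template applies after splitting on $ep_{m_*}^f$. If $ep_{m_*}^f=0$, then $A^2_f(m_*)=\emptyset$ so $\tilde f(m_*)=f(m_*)$, and $f$ is necessarily under Theorem~\ref{th4.1}(ii) with $i_0=m_*$ (a downward induction from $ep_1^f\ge\tilde p(0|1,f(1))=b_0(f(1))/q_1(f(1))>0$ gives $ep_i^f>0$ for all $i<m_*$, so $ep_{m_*}^f=0$ forces $b_0(f(m_*))=0$); then $ep_i^f=0$ for all $i\ge m_*$, the tail sums vanish, and the state-by-state check goes exactly as in Part~(i) with (\ref{4.4}) replacing (\ref{4.4b}), and strictness likewise. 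If $ep_{m_*}^f>0$, I would first show $b_0(\tilde f(m_*))=0$: an action $a_0\in A(m_*)$ with $b_0(a_0)=0$ lies in $A^2_f(m_*)$ (the only down-jump $m_*\to m_*-1$ then has rate $m_*b_0(a_0)=0$, so $a_0$'s value $\tilde p(0|m_*,a_0)+\sum_{j<m_*}\tilde p(j|m_*,a_0)ep_j^f$ is $0<ep_{m_*}^f$), hence $A^2_f(m_*)\neq\emptyset$ with minimal value $0$, and using $ep_{m_*-1}^f>0$ (downward induction, or $m_*=1$) this value is attained only at actions with $b_0=0$. Since the $m_*\to m_*-1$ rate under $\tilde f$ is then $0$ and branching down-jumps have size one, the process under $\tilde f$ cannot leave $\{i\ge m_*\}$ downward, so $ep_i^{\tilde f}=0\le ep_i^f$ for $i\ge m_*$, strictly at $i=m_*$. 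On $\{1,\dots,m_*-1\}$, $\tilde f$ is under Theorem~\ref{th4.1}(ii) with $i_0=m_*$, so $(ep_i^{\tilde f})_{i<m_*}$ is the unique solution of (\ref{4.4}), i.e. of $x=c+Mx$ with $M=\big(\tilde p(j|i,\tilde f(i))\big)_{1\le i,j<m_*}$; one checks $(ep_i^f)_{i<m_*}$ satisfies $x\ge c+Mx$ (strictly where $A^2_f(i)\neq\emptyset$, with a nonnegative remainder where $A^2_f(i)=\emptyset$), and since $M$ satisfies the hypotheses of Lemma~\ref{le4.2a} (note $\sum_{j}\tilde p(j|1,\tilde f(1))<1$ because $b_0(\tilde f(1))>0$), $I-M$ is invertible; as $M$ is also nonnegative and substochastic, $\rho(M)<1$ and $(I-M)^{-1}=\sum_{n\ge0}M^n\ge0$, so multiplying $x\ge c+Mx$ through gives $ep_i^f\ge ep_i^{\tilde f}$ on $\{1,\dots,m_*-1\}$. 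Strictness for $\tilde f\neq f$ is then immediate, either from $ep_{m_*}^{\tilde f}=0<ep_{m_*}^f$, or, if the change is at some $i_1<m_*$, from the substitution-into-(\ref{4.4}) contradiction.

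The hard part will be Part~(ii): first, showing that one improvement step collapses the whole tail $\{i\ge m_*\}$ to extinction probability $0$ — this rests on the auxiliary fact $ep_{m_*-1}^f>0$ (a downward induction driven by $b_0(f(i))>0$ for $i<m_*$, which is precisely the minimality of $m_*$) together with the branching feature that down-jumps move by exactly one; and second, the finite-dimensional comparison ``a supersolution dominates the unique solution of (\ref{4.4})'', which hinges on $(I-M)^{-1}\ge 0$, i.e. on $\rho(M)<1$, which follows from the invertibility of $I-M$ (Lemma~\ref{le4.2a}) once $M$ is known to be substochastic. Part~(i) is essentially bookkeeping once one observes $L(i,a)ep_m^f=\sum_{k\ge m}\tilde p(k|i,a)ep_k^f$.
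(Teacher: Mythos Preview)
Your argument is correct. For Part~(i) you do exactly what the paper does: check that $(ep_i^f)_{i\ge1}$ is a supersolution of the $\tilde f$-system, invoke Lemma~\ref{le3.1}(ii), and obtain strictness by a contradiction against Theorem~\ref{th4.1}(i). The observation $L(i,a)ep_m^f=\sum_{k\ge m}\tilde p(k|i,a)ep_k^f$ is precisely what the paper is using (without saying so) when it cites Lemma~\ref{le3.1}.

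For Part~(ii) the paper writes only ``(ii) can be proved by an argument similar as (i).'' Your treatment is more careful and in one respect genuinely different. The direct analogue of (i) would require $ep_j^f=0$ for $j\ge m_*$, and you correctly notice this can fail for the \emph{initial} $f\in F_m(a_*)$; your case split on $ep_{m_*}^f$ addresses this. In the case $ep_{m_*}^f>0$ you do not use Lemma~\ref{le3.1}(ii); instead you (a) prove that the improvement at $m_*$ necessarily selects an action with $b_0=0$ (hence $ep_i^{\tilde f}=0$ for $i\ge m_*$), and (b) run a finite-dimensional comparison on $\{1,\dots,m_*-1\}$ via $(I-M)^{-1}=\sum_{n\ge0}M^n\ge0$, justified since $M$ is nonnegative substochastic and $I-M$ is invertible by Lemma~\ref{le4.2a}, so $\rho(M)<1$. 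This route is sound and fills in what the paper leaves to the reader. One cosmetic remark: your positivity claim $ep_{m_*-1}^f>0$ is an \emph{upward} induction from $ep_1^f\ge\tilde p(0|1,f(1))>0$ (using $b_0(f(i))>0$ for $i<m_*$), not a downward one as you write.
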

\begin{proof} First prove (i). For $m_*=m+1$, if $A^1_f(i) \neq \emptyset$, then $ep_i^f>\tilde{p}(0|i, \tilde{f}(i))+\sum\limits_{j=1}^{m-1}\tilde{p}(j|i, \tilde{f}(i))ep^f_j+L(i,\tilde{f}(i))ep^f_m$. If $A^1_f(i)=\emptyset$, then $f(i)=\tilde{f}(i)$ and hence
	$ep_i^f=\tilde{p}(0|i, \tilde{f}(i))+\sum\limits_{j=1}^{m-1}\tilde{p}(j|i, \tilde{f}(i))ep^f_j+L(i,\tilde{f}(i))ep^f_m$.
	Thus, for all $1 \leq i \leq m$,
	$$
	ep_i^f\geq\tilde{p}(0|i, \tilde{f}(i))+\sum\limits_{j=1}^{m-1}\tilde{p}(j|i, \tilde{f}(i))ep^f_j+[\sum\limits_{j=m}^{\infty}\tilde{p}(j|i, \tilde{f}(i))\rho_*^{j-m}]ep^f_m.
	$$
	Therefore, by Lemma \ref{le3.1}, we have $ep_i^f \geq ep_i^{\tilde{f}}$ for all $1 \leq i \leq m$.
	
	If $\tilde{f}\neq f$, then $A^1_f(i) \neq \emptyset$ and $\tilde{f}(i)\in A^1_f(i)$ for some $1 \leq i \leq m$. Hence, $ep_i^f>ep_i^{\tilde{f}}$ for some $ 1 \leq i \leq m$.
	
(ii) can be proved by an argument similar as (i).
\end{proof}

We now present a novel algorithm of an optimal policy in $F_m(a_*)$ and the minimal extinction probability
$$
EP^*:=(ep_1^*,\ldots, ep_m^*, ep_{m+1}^*,\ldots).
$$
For this end, we make some notations. First, denote $EP^f_{m_*-1}:=(ep_i^{f}:\ 1 \leq i \leq m_*-1)$, $\tilde{P}_{m_*-1}^f=(\tilde{p}(0|i,f(i)): 1 \leq i \leq m_*-1)$ and $\tilde{P}_{1,m_*-1}^f:=(\tilde{p}(1|i, f(i)), \cdots, \tilde{p}(m-1|i, f(i)),\sum\limits_{j=m}^{\infty}\tilde{p}(j|i, f(i))\rho_*^{j-m}:1\leq i\leq m_*-1)$.

(i) If $m_*=m+1$, then for $f \in F_m(a_*)$. Therefore, the unique solution of
\begin{eqnarray*}
	\begin{cases}
		x_i=\tilde{p}(0|i, f(i))+\sum\limits_{j=1}^{m-1}\tilde{p}(j|i, f(i))x_j+L(i,f(i))x_m,\ & 1 \leq i \leq m,\\
		0\leq x_i\leq 1,\ & 1 \leq i \leq m,
	\end{cases}
\end{eqnarray*}
is given by
$ EP^f_{m}=(I_{m}-\tilde{P}_{1,m}^f)^{-1}\tilde{P}_{m}^f$.

(ii) If $m_*\leq m$, then for $f \in F_m(a_*)$ with $b_0(f(m_*))=0$, denote $EP^f_{m_*-1}:=(ep_i^{f}:\ 1 \leq i \leq m_*-1)$, $\tilde{P}_{m_*-1}^f=(\tilde{p}(0|i,f(i)): 1 \leq i \leq m_*-1)$ and $\tilde{P}_{1,m_*-1}^f:=(\tilde{p}(1|i, f(i)), \cdots, \tilde{p}(m_*-1|i, f(i)):1\leq i\leq m_*-1)$. Therefore, the unique solution of
\begin{eqnarray*}
	\begin{cases}
		x_i=\tilde{p}(0|i, f(i))+\sum\limits_{j=1}^{m_*-1}\tilde{p}(j|i, f(i))x_j,\ & 1 \leq i \leq m_*-1,\\
		0\leq x_i\leq 1,\ & 1 \leq i \leq m_*-1,
	\end{cases}
\end{eqnarray*}
is given by
$EP^f_{m_*-1}=(I_{m_*-1}-\tilde{P}_{1,m_*-1}^f)^{-1}\tilde{P}_{m_*-1}^f$.

\begin{alg}\label{alg4.1}
	\rm{An improved-policy iteration algorithm of an optimal policy:
\begin{description}
	\item[(0)]\  Get $m_*$ by (\ref{m_*}).
			
\item[(1)]\  Obtain $\rho(a)$ by computing the minimal nonnegative solution of $B(a;v)=0$ for every $a \in \bar{A}$. Get $\rho_*=\min\limits_{a \in \bar{A}}\rho(a)$, $a_*\in \bar{A}$ satisfying $\rho(a_*)=\rho_*$ and $F_m(a_*)=\{f\in F: f(i)=a_* \ \text{for\ all}\ i\geq m+1\}$.
			
\item[(2)]\  Set $n=0$ and choose $f_0$ from $F_m(a_*)$. If $m_* \leq m$, then go to step (2a); if $m_* =m+1$, then go to step 3.
\begin{description}
\item[(2a)]\ \ (Policy evaluation) Get the extinction probability $EP^{f_n}_{m_*-1}:=(ep_i^{f_n}:\ 1 \leq i \leq m_*-1)$ by
\begin{eqnarray*}
EP^{f_n}_{m_*-1}=(I_{m_*-1}-\tilde{P}_{1,m_*-1}^{f_n})^{-1}
					\tilde{P}_{m_*-1}^{f_n}.
\end{eqnarray*}
\ \ \ \item[(2b)]\ (Policy improvement) Choose $f_{n+1}\in F_m(a_*)$ such that
\small{\begin{eqnarray*}
\begin{cases}
f_{n+1}(i)\in \mathop{\rm{argmin}}\limits_{a\in A(i)}\{p(0|i,a)+\sum\limits_{j=1}^{m_*-1}p(j|i, a)
						ep_j^{f_n}\},&\text{if}\ 1 \leq i \leq m_*, A^1_{f_n}(i)\neq \emptyset,\\
						f_{n+1}(i)=f_n(i), & \text{otherwise}.
\end{cases}
\end{eqnarray*}}				
\ \ \ \item[(2c)]\ If $f_{n+1}=f_n$, then stop, and set the optimal policy $f^*=f_n$ and the minimal extinction probability
\begin{eqnarray*}
ep_i^*=\begin{cases}
		ep_i^{f_n},& \text{if}\  1 \leq i \leq m_*-1\\
						0,\quad & \text{if}\  i \geq m_*;
\end{cases}
\end{eqnarray*}
otherwise increment $n$ by $1$ and return to step (2a).
\end{description}

\item[(3)]\  (Policy evaluation) Get the extinction probability $EP^{f_n}_{m}:=(ep_i^{f_n}:\ 1 \leq i \leq m)$ by
			\begin{eqnarray*}
				EP^{f_n}_{m}=(I_m-\tilde{P}_{1,m}^{f_n})^{-1}\tilde{P}_m^{f_n}.
			\end{eqnarray*}

\item[(4)]\  (Policy improvement) Choose $f_{n+1}\in F_m(a_*)$ such that
\small{\begin{eqnarray*}
\begin{cases}
f_{n+1}(i)\in \mathop{\rm{argmin}}\limits_{a\in A(i)}\{p(0|i,a)
					+\sum\limits_{j=1}^{m-1}p(j|i, a)
					ep_j^{f_n}+L(i,a)ep_m^{f_n}\},\quad & \text{if}\ 1 \leq i \leq m, A^2_{f_n}(i)\neq \emptyset\\
f_{n+1}(i)=f_n(i), & \text{otherwise}.
\end{cases}
\end{eqnarray*}}
			
\item[(5)]\  If $f_{n+1}=f_n$, then stop, and set the optimal policy $f^*=f_n$ and the minimal extinction probability
\begin{eqnarray*}
ep_i^*=\begin{cases}
		ep_i^{f_n},& \text{if}\  1 \leq i \leq m\\
		\rho_*^{i-m}ep_m^{f_n},\quad & \text{if}\  i \geq m+1;
\end{cases}
\end{eqnarray*}
otherwise increment $n$ by $1$ and return to step 3.
\end{description}	}
\end{alg}

\begin{rem}\label{rem4.1} \rm{
 Since $A(i)=\bar{A}\ (i\geq m)$ is a finite, $F_m(a_*)$ can obtained in finitely many iterations. Furthermore, Algorithm \ref{alg4.1} terminates after
finitely many iterations. Indeed,
by the definition of $F_m(a_*)$ and Algorithm \ref{alg4.1}, we have $f_n(i)=a_*\ (i\geq m+1)$ for all $n\geq 1$ and $ep_i^{f_{n+1}} \leq ep_i^{f_n}$ for all $i \geq 1$ in both cases. If $f_{n+1}$ is strictly better than $f_n$ for every $n$, i.e., $ep^{f_{n+1}}_i<ep_i^{f_n}$ for some $1 \leq i \leq m$. It contradicts with the finiteness of $F_m(a_*)$.
}
\end{rem}
\section{ Appendix}
In this section, we presents the proofs of Theorems \ref{th4.1}-\ref{th4.2} in Section \ref{section4}.

A. Proof of Theorem \ref{th4.1}:
\begin{proof}
	Take $f \in F_m(a_*)$. By Lemma \ref{le3.1} and Lemma \ref{le4.1}, it can be seen that $(ep_i^f: 1 \leq i \leq m)$ is a nonnegative solution of
\small{	\begin{eqnarray}\label{4.9}
		\begin{cases}
			x_i \!=\! \tilde{p}(0|i, f(i))\!+\!\sum\limits_{j=1}^{m-1}\tilde{p}(j|i, f(i))x_j \! +\! L(i,f(i))x_m,\ & 1 \leq i \leq m,\\
			0 \leq x_i \leq 1,\ & 1 \leq i \leq m.
		\end{cases}
	\end{eqnarray}}
	
	(i)\ If $\min\limits_{1 \leq i \leq m}b_0(f(i))>0$, then $b_0(f(i))>0$ for all $1 \leq i \leq m$ and thus $\tilde{p}(0|1, f(1))>0$. Then, we yield that
	\begin{eqnarray*}
		\begin{cases}
			\sum\limits_{j=1}^{m-1}\tilde{p}(j|1, f(1))+L(1,f(1))<1,\\
			\sum\limits_{j=1}^{m-1}\tilde{p}(j|i, f(i))+L(i,f(i))\leq1,\ \ 2 \leq i \leq m.
		\end{cases}
	\end{eqnarray*}
	Moreover, since $b_0(f(i))>0$ for all $1 \leq i \leq m$, we have $\tilde{p}(i-1|i, f(i))>0$. Let $\tilde{P}^{f}_{1,m}$ denote the coefficient matrix of the above system of linear equations. If $\rho_*>0$, then $\tilde{P}^{f}_{1,m}$ is irreducible and $(I_m-\tilde{P}^{f}_{1,m})^{-1}$ exists. If $\rho_*=0$, then $\tilde{P}^{f}_{1,m}$ satisfies the conditions in Lemma~\ref{le4.2a} and hence, $(I_m-\tilde{P}^{f}_{1,m})^{-1}$ also exists. Therefore, (\ref{4.13}) has unique nonnegative solution. Hence, by the definition of $F_m(a_*)$, (\ref{4.6b}) holds and thus (i) holds.
	
	(ii)\ If $\min\limits_{1 \leq i \leq m}b_0(f(i))=0$, then denote  $i_0:=\min\limits_{1 \leq i \leq m}\{i: b_0(f(i))=0\}$. By (\ref{q(j|i)}), we have $\tilde{p}(j|i, f(i))=\frac{q(j|i, f(i))}{q_i(f(i))}=\frac{b_{j-i+1}(f(i))}{b_1(f(i))}$ for every $j \geq i-1$ and $\tilde{p}(j|i, f(i))=0$ for $j \leq i-2$. Thus,
	for all path from every state $i \in [i_0, \infty)$ to state $0$, it must go through state $i_0$. By $b_0(f(i_0))=0$, we have $\tilde{p}(i_0-1 | i_0, f(i_0))=0$ and thus there is no path from state $i_0$ to state $i_0-1$. Therefore, $ep_i^f=0$ for every $i \geq i_0$.
	Moreover, because $(ep_i^f: 1 \leq i \leq m)$ is the nonnegative solution of (\ref{4.9}), $(ep_i^f:\ 1 \leq i \leq i_0-1)$ is the nonnegative solution of
	\begin{eqnarray}\label{K}
		\begin{cases}
			x_i=\tilde{p}(0|i, f(i))+\sum\limits_{j=1}^{i_0-1}\tilde{p}(j|i, f(i))x_j,\ & 1 \leq i \leq i_0-1,\\
			0\leq x_i\leq 1,\ & 1 \leq i \leq i_0-1.
		\end{cases}
	\end{eqnarray}
	It follows from $\sum\limits_{j=1}^{i_0-1}\tilde{p}(j|1, f(1))<1$ and Lemma~\ref{le4.2a} that (\ref{K}) has the unique nonnegative solution, that is, $(ep_i^f:\ 1 \leq k \leq i_0-1)$. Hence, (ii) holds.
\end{proof}
B. Proof of Theorem \ref{th4.2}:
\begin{proof}
	(i)\ Suppose that $m_*=m+1$. If $\tilde{f} \in F_m(a_*)$ is optimal, then $ep^{\tilde{f}}_i=ep^*_i$ for all $i \geq 1$, from which, together with Theorem \ref{th3.1} and Theorem \ref{th4.1}(i),  we have for $1 \leq i \leq m$,
	\begin{eqnarray*}
		\nonumber
		ep_i^{\tilde{f}}\!&=&\! ep^*_i \! =\! \min_{a\in A(i)}\{\tilde{p}(0|i, a)+\sum_{j=1}^{m-1}\tilde{p}(j|i, a)ep^*_j+L(i,a)ep^*_m\}\\
		\! &=&\! \min_{a\in A(i)}\{\tilde{p}(0|i, a)+\sum_{j=1}^{m-1}\tilde{p}(j|i, a)ep^{\tilde{f}}_j+L(i,a)ep^{\tilde{f}}_m\},
	\end{eqnarray*}
	i.e., $(ep^{\tilde{f}}_i: 1 \leq i \leq m)$ solves (\ref{4.13}). And by the definition of $F_m(a_*)$,
	we have $ep_i^{\tilde{f}} =ep_m^{\tilde{f}}\rho_*^{i-m}$.
	
	On the other hand, for given ${\tilde{f}} \in F_m(a_*)$, if $(ep^{\tilde{f}}_i: 1 \leq i \leq m)$ solves (\ref{4.13}), and $ep^{\tilde{f}}_i=\rho_*^{i-m}ep^{\tilde{f}}_m\ (i \geq m+1)$. Denote $f^* \in F_m(a_*)$ as an optimal policy.
	
	Since $\min\limits_{1\leq i\leq m}b_0(f(i))>0$, by Theorem \ref{th4.1}(ii) we get
	\begin{eqnarray*}
		ep^{\tilde{f}}_i \! =\! \tilde{p}(0|i, \tilde{f}(i))\!+\!\sum_{j=1}^{m-1}\tilde{p}(j|i, \tilde{f}(i))ep^{\tilde{f}}_j \! +\! L(i,\tilde{f}(i))ep^{\tilde{f}}_m
\end{eqnarray*}	
for $1 \leq i \leq m$.	Moreover, by Theorem \ref{th3.1}, for an optimal policy $f^*$ and every $1 \leq i \leq m$,
	\begin{eqnarray*}
		ep_i^{f^*}
		&=&\min\limits_{a \in A(i)}\{\tilde{p}(0|i, a)+\sum_{j=1}^{m-1}\tilde{p}(j|i, a)ep^{f^*}_j+L(i,a)ep^{f^*}_m\}\\
		&\leq&\tilde{p}(0|i, \tilde{f}(i))+\sum_{j=1}^{m-1}\tilde{p}(j|i, \tilde{f}(i))ep^{f^*}_j+L(i,\tilde{f}(i))ep^{f^*}_m.
	\end{eqnarray*}
Therefore, for $1\leq i\leq m$,
\begin{eqnarray}\label{ep-2}
		0 \leq ep_i^{\tilde{f}}-ep_i^{f^*}
		&\!\leq\!& \sum_{j=1}^{m-1}\tilde{p}(j|i, \tilde{f}(i))(ep_j^{\tilde{f}}-ep_j^{f^*})
+L(i,\tilde{f}(i))(ep_m^{\tilde{f}}-ep^{f^*}_m).
\end{eqnarray}	
Let $\tilde{P}_{1,m}^{\tilde{f}}$ be the coefficient matrix of the above system of linear inequalities. Since $\min\limits_{1\leq i\leq m}b_0(\tilde{f}(i))>0$, we have $\tilde{p}(i-1|i, \tilde{f}(i))>0$ for all $1 \leq i \leq m$, and hence $\sum_{j=1}^{m-1}\tilde{p}(j|1,\tilde{f}(1))+L(1,\tilde{f}(1))<1$.
Similar as in the proof of Theorem~\ref{th4.1},
we have that $(I-\tilde{P}_{1,m}^{\tilde{f}})^{-1}$ exists. Therefore, it follows from (\ref{ep-2}) that $ ep^{\tilde{f}}_i-ep^{f^*}_i=0$ for all $1\leq i\leq m$.
Hence, $\tilde{f}$ is optimal.
	
(ii)\ Suppose that $m_* \leq m$. Then there exists $\tilde{a}\in A(m_*)$ such that $b_0(\tilde{a})=0$. Hence, for every policy $f\in F_m(a_*)$ satisfying $f(m_*)=\tilde{a}$, since every path from $i\geq m_*$ to $0$ must go through state $m_*$ and there is no path from $m_*$ to $0$ under $f$, we get $ep^{f}_i=0\ (i\geq m_*)$.

If $\tilde{f} \in F_m(a_*)$ is optimal, then $ep^{\tilde{f}}_i=0\ (i\geq m_*)$, which deduces that $b_0(\tilde{f}(m_*))=0$ by the definition of $m_*$. By Theorem \ref{th4.1} (i),
\begin{eqnarray*}
ep^{\tilde{f}}_i=ep_i^*&=& \min\limits_{a \in A(i)}\{\tilde{p}(0|i,a)+\sum\limits_{j=1}^{m_*-1}\tilde{p}(j|i, a)ep_j^*\}=\min\limits_{a \in A(i)}\{\tilde{p}(0|i,a)+\sum\limits_{j=1}^{m_*-1}\tilde{p}(j|i, a)ep_j^{\tilde{f}}\}.
\end{eqnarray*}
Hence, $(ep^{\tilde{f}}_i:\ 1 \leq i \leq m_*-1)$ solves the OE-2, that is, (\ref{4.13a}), and
$ep^{\tilde{f}}_i=0$ for all $ i \geq m_*$.

On the other hand, suppose that $(ep^{\tilde{f}}_i:\ 1 \leq i \leq m_*-1)$ solves (\ref{4.13a}) and
$ep^{\tilde{f}}_i=0$ for all $ i \geq m_*$. Similar as the proof of (i), we can prove that  $ep^{\tilde{f}}_i=ep^{*}_i\ (1\leq i\leq m_*-1)$. Hence, $\tilde{f}$ is optimal.
\end{proof}

\section*{Acknowledgement}
This paper is supported by the National Natural Science Foundation of China (72342006) and the National Key Research and Development Program of China (2022YFA1004600).

\vspace{6mm}

\end{document}